\def\ac{{\mathcal{AC}}}\def\tc{{\mathcal{TC}}}
\def\TC{{\rm TC}}
\def\ccup{\sqcup}
\def\revddots{\mathinner{\mkern1mu\raise\p@\vbox{\kern7\p@\hbox{.}}\mkern2mu\raise4\p@\hbox{.}\mkern2mu\raise7\p@\hbox{.}\mkern1mu}}
\newcounter{paragrafsubsub}[subsubsection]
\renewcommand{\theparagrafsubsub}{%
\thesubsubsection.\roman{paragrafsubsub}}
\newcommand{\paragrafsubsub}{%
\refstepcounter{paragrafsubsub}
{\bf \theparagrafsubsub}\hspace{0.2em}--- }
\newcounter{paragrafsub}[subsection]
\renewcommand{\theparagrafsub}{\thesubsection.\arabic{paragrafsub}}
\newcommand{\paragrafsub}{%
\refstepcounter{paragrafsub}
{\bf \theparagrafsub}\hspace{0.2em}--- }
\newcounter{paragraf}[section]
\renewcommand{\theparagraf}{\thesection.\arabic{paragraf}}
\newcommand{\paragraf}{%
\refstepcounter{paragraf}
{\bf \theparagraf}\hspace{0.2em}--- }
\newcommand\paragraphe{%
\par \indent
\ifcase\value{subsection} %
\paragraf
\else
\ifcase\value{subsubsection}\paragrafsub %
\else\paragrafsubsub
\fi\fi
}
\def\longto{\longrightarrow}
\def\lr{{\mathcal LR}}
\def\QQ{{\mathbb Q}}\def\ZZ{{\mathbb Z}}
\def\CC{{\mathbb C}}
\def\Face{{\mathcal F}}
\def\Pic{\rm Pic}
\def\kbprod{{\odot_0}}
\def\Li{{\mathcal{L}}}
\def\Mi{{\mathcal{M}}}
\def\quot{/\hspace{-.5ex}/}
\def\LR{{\rm LR}}
\newtheorem{lemma}{Lemma}
\newtheorem{theo}{Theorem}
\newenvironment{proof}{{\noindent\bf Proof.}}{\hfill $\square$}
\newenvironment{defin}{{\noindent\bf Definition.}}{\\}
\begin{document}
\title{Eigencones and the PRV conjecture}
\author{N. Ressayre\footnote{Universit{\'e} Montpellier II - 
CC 51-Place Eug{\`e}ne Bataillon -
34095 Montpellier Cedex 5 -
France - {\tt ressayre@math.univ-montp2.fr}}}

\maketitle

\begin{abstract}
Let $G$ be a complex semisimple simply connected algebraic group.
Given two irreducible representations $V_1$ and $V_2$ of $G$,
we are interested in some components of $V_1\otimes V_2$.
Consider two geometric realizations of $V_1$ and $V_2$ using the
Borel-Weil-Bott theorem.
Namely, for $i=1,\,2$, let $\Li_i$ be a $G$-linearized line bundle on $G/B$
such that ${\rm H}^{q_i}(G/B,\Li_i)$ is isomorphic to $V_i$.
Assume that the cup product
$$
{\rm H}^{q_1}(G/B,\Li_1)\otimes {\rm H}^{q_2}(G/B,\Li_2)\longto
 {\rm H}^{q_1+q_2}(G/B,\Li_1\otimes\Li_2)
$$
is non zero. Then, ${\rm H}^{q_1+q_2}(G/B,\Li_1\otimes\Li_2)$ is an irreducible
component of $V_1\otimes V_2$; 
such a component is said to be {\it cohomological}.
Solving a Dimitrov-Roth conjecture, we prove here that the cohomological components
of $V_1\otimes V_2$ are exactly the PRV components of stable multiplicity one.
Note that Dimitrov-Roth already obtained some particular cases.
We also characterize these components in terms of the geometry of the Eigencone of $G$.
Along the way, we prove that the structure coefficients of the Belkale-Kumar product
on ${\rm H}^*(G/B,\ZZ)$ in the Schubert basis are zero or one.
\end{abstract}

\section{Introduction}

Let $G$ be a complex semisimple simply connected algebraic group 
with a fixed Borel subgroup $B$ and maximal torus $T\subset B$.
Let $X(T)$ denote the character group of $T$.
For any dominant $\lambda\in X(T)$, $V_\lambda$ denotes the irreducible
$G$-module of highest weight $\lambda$.
We will denote by $\LR(G)$ the set of triples $(\lambda,\,\mu,\,\nu)$ 
of dominant weights such that $V_\lambda\otimes V_\mu\otimes V_\nu$
contains non zero $G$-invariant vectors.
Note that, $(\lambda,\,\mu,\,\nu)$ belongs to $\LR(G)$ if and only if
$V_\nu^*$ is a submodule of $V_\lambda\otimes V_\mu$. 

Let $W$ denote the Weyl group of $T$ and $w_0$ denote the longest element of $W$.
The most obvious component of $V_\lambda\otimes V_\mu$ is $V_{\lambda+\mu}$
corresponding to the point $(\lambda,\,\mu,\,-w_0(\lambda+\mu))$ in $\LR(G)$.
Following Dimitrov-Roth, we present three natural generalizations of these elements
of $\LR(G)$. Our main result which was conjectured and partially proved by Dimitrov-Roth
in \cite{DR:prv1,DR:prv2} asserts that these three generalizations actually coincide.

\bigskip
{\bf The PRV conjecture.}
Let $(\lambda,\,\mu,\,\nu)$ be a triple of dominant weights.
In 1966, Parthasarathy, Ranga-Rao and Varadarajan proved in \cite{PRV} that 
if there exists $w\in W$ such that $w\lambda+ ww_0\mu+w_0\nu=0$ then 
$(\lambda,\,\mu,\,\nu)\in\LR(G)$; and more precisely that 
$(V_{k\lambda}\otimes V_{k\mu}\otimes V_{k\nu})^G$ has dimension one for any
positive integer $k$
(here, $V^G$ denotes the subspace of $G$-invariant vectors in the $G$-module $V$).
Kumar \cite{Kumar:prv1} and Mathieu \cite{Mathieu:prv} independently proved the PRV conjecture
which asserts that $(\lambda,\,\mu,\,\nu)\in\LR(G)$ if there exist $u,v,w\in W$ such that 
$u\lambda+v\mu+w\nu=0$. 
Unlike the original PRV situation, $(V_\lambda\otimes V_\mu\otimes V_\nu)^G$ may have 
dimension greater than one.
Here, we are interested in the set of triple of dominant weights $(\lambda,\,\mu,\,\nu)$
such that:
\begin{enumerate}
\item \label{pr:prv}
$\exists u,v,w\in W$ s.t. $u\lambda+v\mu+w\nu=0$; and,
\item \label{pr:stabone}
$\dim(V_{k\lambda}\otimes V_{k\mu}\otimes V_{k\nu})^G=1$ for any $k\geq 1$.
\end{enumerate}
Such a point in $\LR(G)$ is said to have the PRV property (Property~\ref{pr:prv})
and to have stable multiplicity one (Property~\ref{pr:stabone}).

\bigskip
{\bf Cohomological component of $V_\lambda\otimes V_\nu$.}
Consider the complete flag variety $X=G/B$.
For $\lambda\in X(T)$, we denote by $\Li_\lambda$ the $G$-linearized line bundle on $X$ such 
that $B$ acts on the fiber over $B/B$ by the character $-\lambda$.
If $\lambda$ is dominant the Borel-Weil theorem asserts that ${\rm H}^0(X,\Li_\lambda)$ is
isomorphic to $V_\lambda^*$.
We also set $\lambda^*=-w_0\lambda$.
 The points $(\lambda,\,\mu,\,-w_0(\lambda+\mu))$ of $LR(G)$  have the following 
geometric interpretation:
the morphism 
\begin{eqnarray}
  \label{eq:cupzero}
   {\rm H}^0(X,\Li_\lambda)\otimes  {\rm H}^0(X,\Li_\mu)\longto  
{\rm H}^0(X,\Li_{\lambda+\mu}),
\end{eqnarray}
given by the product of sections is non zero.

Following Dimitrov-Roth (see~\cite{DR:prv1,DR:prv2}), 
we are now going to introduce a natural generalization of 
these points of $\LR(G)$ coming from the Borel-Weil-Bott theorem.
Let $l(w)$ denote the length of $w\in W$ and $\rho$ denote the half sum of the positive
roots. For $w\in W$ and $\lambda\in X(T)$, we set:
\begin{eqnarray}
  \label{eq:Aaff}
  w\cdot\lambda=w(\lambda+\rho)-\rho.
\end{eqnarray}
The Borel-Weil-Bott theorem asserts that for any dominant weight $\lambda$ and
any $w\in W$, ${\rm H}^{l(w)}(X,\Li_{w\cdot\lambda})$ is
isomorphic to $V_\lambda^*$.
Let $(\lambda,\,\mu,\,\nu)$ be a triple of dominant weights.
We will say that $(\lambda,\,\mu,\,\nu^*)$ is a 
{\it cohomological point} of $\LR(G)$ if the cup product:
\begin{eqnarray}
  \label{eq:cup}
   {\rm H}^{l(u)}(X,\Li_{u\cdot\lambda})\otimes  
{\rm H}^{l(v)}(X,\Li_{v\cdot\mu})\longto  
{\rm H}^{l(w)}(X,\Li_{w\cdot\nu}),
\end{eqnarray}
is non zero for some  $u,v,w\in W$ such that $l(w)=l(u)+l(v)$ and
$u\cdot\lambda+v\cdot\mu=w\cdot\nu$. 

\bigskip
{\bf Regularly extremal points.}
Let $\lr(G)$ denote the  cone generated by the semigroup $\LR(G)$ in the 
rational vector space $X(T)_\QQ^3=(X(T)\otimes\QQ)^3$.
Let $X(T)_\QQ^+$ (resp. $X(T)_\QQ^{++}$) denote the cone generated by dominant
(resp. strictly dominant) weights of $T$. 
Since the semigroup $\LR(G)$ is finitely generated, $\lr(G)$ is a closed convex polyhedral 
cone contained in $(X(T)_\QQ^+)^3$.
A face of $\lr(G)$ which intersects $(X(T)_\QQ^{++})^3$ is said to be {\it regular}.
In \cite{GITEigen} and \cite{GITEigen2}, the regular faces are parameterized bijectively. 
In particular, it is proved that the dimension of any regular face is greater 
or equal to $2r$ (where $r$ is the rank of $G$). 
A point in $\LR(G)$ is said to be {\it regularly extremal} if it belongs to a regular face
of $\lr(G)$ of dimension $2r$. 
Note that a regularly extremal point is not necessarily
regular but it is only a limit of regular points in $\lr(G)$ which belongs to 
a minimal regular face.

\bigskip
{\bf The main result.}
We can now state 

\begin{theo}\label{th:main}
Let $(\lambda,\,\mu,\,\nu)$ be a triple of dominant weights.
The following are equivalent:
\begin{enumerate}
\item $(\lambda,\,\mu,\,\nu)$ satisfies the PRV property and has stable multiplicity one;
\item \label{ass:cohom}
$(\lambda,\,\mu,\,\nu)$ is a cohomological point in $\LR(G)$;
\item $(\lambda,\,\mu,\,\nu)$ is regularly extremal.
\end{enumerate}
\end{theo}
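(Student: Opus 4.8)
The plan is to prove the theorem by establishing a cycle of implications $(ii)\Rightarrow(iii)\Rightarrow(i)\Rightarrow(ii)$, with the Belkale--Kumar product on $\H^*(G/B,\ZZ)$ as the central bookkeeping tool. The key translation is this: the non-vanishing of a cup product as in~\eqref{eq:cup} with $l(w)=l(u)+l(v)$ corresponds, after writing $u=w_1$, $v=w_2$ and transporting through Poincar\'e duality, to a structure constant of the ordinary cup product $\sigma_{w_1}\cdot\sigma_{w_2}$ in $\H^*(G/B)$ being non-zero in the ``top'' Schubert class relevant to $w$. The condition $l(w)=l(u)+l(v)$ is exactly the condition under which the ordinary product and the Belkale--Kumar product $\odot_0$ agree on these classes, so a cohomological point records a non-zero Belkale--Kumar structure constant. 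I would first carry out this dictionary carefully, recalling Kostant's description of $\H^*(G/B)$ and the fact (to be proved along the way, as announced in the abstract) that the Belkale--Kumar structure constants in the Schubert basis are $0$ or $1$.

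Next I would prove $(ii)\Rightarrow(iii)$. By Belkale--Kumar's characterization of the walls of the eigencone, a triple $(\lambda,\mu,\nu)$ lies on a \emph{regular} face of $\lr(G)$ of codimension one precisely when a Belkale--Kumar coefficient equals one; more generally, the regular faces of $\lr(G)$ are cut out by such coefficients, and the minimal regular faces (dimension $2r$) correspond to ``maximal chains'' of such wall conditions. The combinatorial input here is that a cohomological point, carrying the datum $(u,v,w)$ with $l(u)+l(v)=l(w)$ and a non-zero $\odot_0$-coefficient, forces $(\lambda,\mu,\nu)$ to satisfy enough independent wall equalities to land on a $2r$-dimensional regular face. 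I would use the parameterization of regular faces from~\cite{GITEigen,GITEigen2} cited in the excerpt, checking that the face attached to $(u,v,w)$ has the minimal dimension exactly when the coefficient is one (as opposed to merely non-zero, which the $0$-or-$1$ statement makes automatic on regular faces).

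For $(iii)\Rightarrow(i)$: a regularly extremal point lies on a minimal regular face, and I would use the explicit description of that face — its interior points are of the form $u^{-1}(\text{something})$ for the triple $(u,v,w)$ attached to it — to read off a Weyl-group relation $u'\lambda+v'\mu+w'\nu=0$, giving the PRV property~\ref{pr:prv}. Stable multiplicity one~\ref{pr:stabone} should follow because the GIT/eigencone machinery identifies $\dim(V_{k\lambda}\otimes V_{k\mu}\otimes V_{k\nu})^G$ with a dimension of sections of a line bundle on a GIT quotient which, on a minimal regular face, is a point (the relevant quotient $G\quot(G/B)^3$ degenerates to a point precisely in the $2r$-dimensional case), so the multiplicity is $1$ for all $k$. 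Finally $(i)\Rightarrow(ii)$ is the converse direction: given the PRV relation and stable multiplicity one, I would produce the Weyl elements $u,v,w$ realizing the cohomological cup product, the point being that stable multiplicity one forces the invariant to be ``rigid'' and hence detectable cohomologically via Bott's theorem; this is essentially the hard analytic core and where Dimitrov--Roth had only partial results.

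The main obstacle I expect is precisely the implication $(i)\Rightarrow(ii)$, i.e.\ showing that \emph{every} PRV component of stable multiplicity one is cohomological, and its quantitative heart — the proof that Belkale--Kumar structure constants are $0$ or $1$ in the Schubert basis. The $0$-or-$1$ statement is what makes the dictionary between cohomological points, wall-crossing inequalities, and minimal regular faces clean enough to be an equivalence rather than a one-way implication; I would prove it using the Levi-movability characterization of Belkale--Kumar (the coefficient counts, with multiplicity, something that Levi-movability collapses to at most one) together with a transversality/general-position argument on the relevant tangent spaces inside $G/P$, possibly by induction on the rank or by reduction to the codimension-one case on each regular wall. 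Once that is in hand, the geometric equivalences $(ii)\Leftrightarrow(iii)$ become a matter of matching the parameterization of regular faces, and $(iii)\Leftrightarrow(i)$ follows from the GIT description of the eigencone.
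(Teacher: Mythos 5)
Your overall architecture (reduce everything to a Weyl-group/Belkale--Kumar dictionary, use the $0$-or-$1$ theorem to match cohomological points with minimal regular faces, and get stable multiplicity one from a GIT quotient being a point) is in the right spirit, but there are two genuine gaps. First, your opening ``dictionary'' is wrong as stated: the condition $l(w)=l(u)+l(v)$ is \emph{not} the condition under which the ordinary cup product and the product $\kbprod$ agree. The Belkale--Kumar coefficient is declared zero unless the triple is Levi-movable, and for $G/B$ Levi-movability is the inversion-set condition $\Phi^+=\Phi_u^c\ccup\Phi_v^c\ccup\Phi_w^c$, which is strictly stronger than ``$c_{uvw}\neq 0$ in complementary degree''. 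Likewise, non-vanishing of the cup product~(\ref{eq:cup}) on higher cohomology of line bundles is not equivalent to an ordinary Schubert structure constant being non-zero; the correct characterization of cohomological points is Dimitrov--Roth's theorem (recalled as Theorem~\ref{th:DR}): existence of $u,v,w$ with $u^{-1}\lambda+v^{-1}\mu+w^{-1}\nu=0$ and $\Phi^+=\Phi_u\ccup\Phi_v\ccup\Phi_w$. This is a substantial external input (or a substantial thing to prove); your sketch neither cites it nor replaces it, and building the rest of the argument on the weaker ``ordinary coefficient non-zero'' condition would break the equivalence with regular extremality. Once the correct combinatorial characterizations are in hand, the passage between assertion~(ii) and assertion~(iii) is the elementary translation $\Phi_{uw_0}=-w_0\Phi_u^c$, $\Phi_{w_0u}=\Phi_u^c$ plus Theorem~\ref{th:bkprod} to upgrade ``non-zero'' to $\sigma_u\cdot\sigma_v\cdot\sigma_w=\sigma_e$; you gesture at this but do not carry it out.

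Second, and more seriously, the implication you yourself identify as the core --- that PRV plus stable multiplicity one implies the cohomological/regularly extremal conditions --- is not argued at all: ``stable multiplicity one forces the invariant to be rigid and hence detectable cohomologically'' is a restatement of the goal, not a proof. In the paper this is the long GIT argument in the proof of Theorem~\ref{th:main2}: stable multiplicity one is converted (Lemma~\ref{lem:stabone}) into $X^{\rm ss}(\Li_{(\lambda,\mu,\nu)})\quot G$ being a point; one passes to the product of partial flag varieties $\overline{X}$ on which the line bundle is ample, uses Luna's theorem and the PRV relation to show the $T$-fixed point $\overline{z}$ is semistable, invokes the parameterization of regular faces by well covering pairs $(\overline{C},\lambda)$, proves via Lemma~\ref{lem:Xpt} that $\overline{C}$ is a single point and that $G_{\overline{z}}=G^\lambda$, lifts the pair to $(G/B)^3$, and finally perturbs $\lambda$ to a regular one-parameter subgroup using multiplicativity results to obtain $\sigma_u\cdot\sigma_v\cdot\sigma_w=[{\rm pt}]$ with the inversion-set condition. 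None of these steps (nor any substitute for them) appears in your plan. Similarly, your proposed proof of the $0$-or-$1$ theorem (``transversality/general position, possibly induction on rank'') is only a hope; the paper's proof is a concrete geometric argument: Levi-movability makes $\eta\colon G\times_BC^+\to (G/B)^3$ \'etale, hence a covering of degree $d$ equal to $\#\eta^{-1}(z)$, the fiber is identified with $G_z\cdot B/B$ via a limit argument for a dominant regular $\lambda$, and connectedness of $G_z$ forces $d=1$. As it stands, your proposal establishes neither the hard implication nor the key auxiliary theorem, so it does not yet constitute a proof.
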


Theorem~\ref{th:main} was conjectured in \cite{DR:prv1}.
In \cite{DR:prv2}, Dimitrov-Roth prove it when $\lambda$, $\mu$ or $\nu$ is strictly dominant.
Note that this case also follows easily from \cite[Theorem~G]{GITEigen}.
In \cite{DR:prv2}, Dimitrov-Roth also prove the case when $G$ is a simple classical group.
Here, we present a proof independent of the type of $G$ semisimple.

\bigskip
We now introduce some notation to characterize in a more concrete way the points satisfying
Theorem~\ref{th:main}.
Let $\Phi^+$ denote the set of positive roots. 
For $w\in W$, we consider the following {\it set of inversions of $w$}:
\begin{eqnarray}
  \label{eq:defPhiw}
  \Phi_w:=\{\alpha\in\Phi^+\,:\, -w\alpha\in \Phi^+\}.
\end{eqnarray}
We also set $\Phi^c_w=\Phi^+-\Phi_w$.

We denote by $\ccup$ the disjoint union.
 For example, Condition~(\ref{eq:Phipart}) below means that $\Phi^+$ is the 
disjoint union of $\Phi_u$, $\Phi_v$ and $\Phi_w$.
By \cite[Theorem~I]{DR:prv2}, we have:

\begin{theo}
  \label{th:concret}
Let $(\lambda,\,\mu,\,\nu)$ be a triple of dominant weights.
Then, $(\lambda,\,\mu,\,\nu)$ satisfies (Assertion~(\ref{ass:cohom}) of) 
Theorem~\ref{th:main} if and only if
there exist $u,\,v$ and $w$ in $W$ such that 
\begin{eqnarray}
  \label{eq:Phipart}
  \Phi^+=\Phi_u\ccup\Phi_v\ccup\Phi_w,
\end{eqnarray}
and
\begin{eqnarray}
  \label{eq:mu=0}
  u^{-1}\lambda+v^{-1}\mu+w^{-1}\nu=0.
\end{eqnarray}
\end{theo}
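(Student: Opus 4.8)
The plan is to reduce Theorem~\ref{th:concret} to a single cohomological statement that translates the cup product of Assertion~(\ref{ass:cohom}) into the combinatorics of inversion sets; identity~(\ref{eq:mu=0}) will then drop out of the relation $x\cdot\chi=x\chi-\sum_{\beta\in\Phi_{x^{-1}}}\beta$ between the dot action and the linear one. For a triple of dominant weights $(\lambda,\mu,\nu)$ put $\tilde\nu:=\nu^*$; unwinding the definition, $(\lambda,\mu,\nu)$ satisfies Assertion~(\ref{ass:cohom}) precisely when there exist $u,v,w\in W$ with $l(w)=l(u)+l(v)$, with $u\cdot\lambda+v\cdot\mu=w\cdot\tilde\nu$, and with the cup product
$$
\H^{l(u)}(X,\Li_{u\cdot\lambda})\otimes \H^{l(v)}(X,\Li_{v\cdot\mu})\longto \H^{l(w)}(X,\Li_{w\cdot\tilde\nu})
$$
nonzero. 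Because $\tilde\nu+\rho$ is strictly dominant, $w$ is determined by the weight $w\cdot\tilde\nu$, and Borel-Weil-Bott identifies the three spaces canonically with $V_\lambda^*$, $V_\mu^*$, $V_{\tilde\nu}^*$.

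The heart of the matter is the following. \emph{Key Lemma: with the data above, the cup product is nonzero if and only if $\Phi_{w^{-1}}=\Phi_{u^{-1}}\ccup\Phi_{v^{-1}}$.} To prove it I would realize the three spaces via the Cousin complex of $X$ attached to its stratification by the opposite Schubert cells $C^z=B^-zB/B$ (of codimension $l(z)$): for $\chi$ dominant, Bott's theorem gives that $\H^\bullet(X,\Li_{z\cdot\chi})$ is concentrated in degree $l(z)$, and a finer analysis (Kostant) shows it is supported on the single cell $C^z$, the contributing local cohomology $\H^{l(z)}_{C^z}(X,\Li_{z\cdot\chi})$ carrying a twist by $\det N^\vee_{C^z/X}$ — a line spanned, at the $T$-fixed point $zB$, by the wedge of the root vectors indexed by $\Phi_{z^{-1}}$ — and being the $B^-$-submodule of $V_\chi^*$ generated by its extremal weight line. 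Computing the cup product on these Cousin resolutions, a product of the $C^u$- and $C^v$-contributions lands in $\det N^\vee_{C^u/X}\wedge\det N^\vee_{C^v/X}$: this vanishes unless $\Phi_{u^{-1}}\cap\Phi_{v^{-1}}=\emptyset$, and when the inversion sets are disjoint it is, up to a nonzero scalar, the generator of $\det N^\vee$ for $\Phi_{u^{-1}}\ccup\Phi_{v^{-1}}$, which pairs nontrivially into the target $\H^{l(w)}(X,\Li_{w\cdot\tilde\nu})$ — itself supported on $C^w$ — exactly when $\Phi_{w^{-1}}=\Phi_{u^{-1}}\ccup\Phi_{v^{-1}}$. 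For the converse implication, if that equality holds then the product of the two extremal generators is a nonzero multiple of the extremal generator of the target (the $G$-module map in play being the canonical projection onto $V_{\tilde\nu}^*$), ultimately by nonvanishing of a product of sections on a Schubert variety, cf.\ (\ref{eq:cupzero}). One may alternatively phrase the whole computation in the language of the Belkale-Kumar deformed product and Levi-movability, the partition condition being exactly Levi-movability of the corresponding triple Schubert intersection; but the route through Kostant is preferable here, the Belkale-Kumar product entering this paper only later.

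Granting the Key Lemma, Theorem~\ref{th:concret} follows from a bijection between the two sets of witnessing triples. Given $u,v,w$ as in Assertion~(\ref{ass:cohom}), set $a:=u^{-1}$, $b:=v^{-1}$, $c:=w_0w^{-1}$. Then $\Phi_a=\Phi_{u^{-1}}$, $\Phi_b=\Phi_{v^{-1}}$, and since $\Phi_{w_0x}=\Phi^c_x$ for every $x\in W$ also $\Phi_c=\Phi^c_{w^{-1}}$; hence the Key Lemma relation says exactly $\Phi^+=\Phi_a\ccup\Phi_b\ccup\Phi_c$, which is (\ref{eq:Phipart}). Substituting $x\cdot\chi=x\chi-\sum_{\beta\in\Phi_{x^{-1}}}\beta$ into $u\cdot\lambda+v\cdot\mu=w\cdot\tilde\nu$ gives
$$
u\lambda+v\mu-w\tilde\nu=\sum_{\beta\in\Phi_{u^{-1}}}\beta+\sum_{\beta\in\Phi_{v^{-1}}}\beta-\sum_{\beta\in\Phi_{w^{-1}}}\beta=0,
$$
the last equality by the partition relation; so $u\lambda+v\mu=w\tilde\nu$, and since $\nu=-w_0\tilde\nu$ we get $c^{-1}\nu=ww_0\nu=-w\tilde\nu$, hence $a^{-1}\lambda+b^{-1}\mu+c^{-1}\nu=u\lambda+v\mu-w\tilde\nu=0$, which is (\ref{eq:mu=0}). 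All steps reverse: from $u',v',w'$ satisfying (\ref{eq:Phipart}) and (\ref{eq:mu=0}), put $u:=u'^{-1}$, $v:=v'^{-1}$, $w:=w'^{-1}w_0$; then $\Phi_{w^{-1}}=\Phi_{w_0w'}=\Phi^c_{w'}=\Phi_{u'}\ccup\Phi_{v'}=\Phi_{u^{-1}}\ccup\Phi_{v^{-1}}$ (so $l(w)=l(u)+l(v)$), the same $\rho$-shift computation run backwards recovers $u\cdot\lambda+v\cdot\mu=w\cdot\tilde\nu$, and the Key Lemma yields the nonzero cup product, i.e.\ Assertion~(\ref{ass:cohom}).

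I expect the genuinely delicate step to be the ``only if'' half of the Key Lemma: that the cup product really vanishes whenever $\Phi_{u^{-1}}\cap\Phi_{v^{-1}}\neq\emptyset$, or whenever the disjoint union is not all of $\Phi_{w^{-1}}$. This cannot be tested on a single weight vector — the target being irreducible, any nonzero $G$-equivariant map onto it is surjective — so it genuinely requires the chain-level ``support in the conormal directions'' bookkeeping of the Cousin/Kostant description above (equivalently, the Belkale-Kumar transversality-versus-Levi-movability dichotomy). Everything else — the $\rho$-shift identity, the bijection $w\mapsto w_0w^{-1}$ of $W$, and the ``if'' half of the Key Lemma (one explicit nonzero product of sections on a Schubert cell) — is routine.
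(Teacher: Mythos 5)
Be aware that the paper itself contains no proof of Theorem~\ref{th:concret}: it is quoted from Dimitrov--Roth (\cite[Theorem~I]{DR:prv2}; the same statement is recalled later as Theorem~\ref{th:DR}), the paper's own contributions being Theorems~\ref{th:bkprod} and \ref{th:main2}. Your surrounding bookkeeping is correct and reversible: the identity $x\cdot\chi=x\chi-\sum_{\beta\in\Phi_{x^{-1}}}\beta$, the substitution $a=u^{-1}$, $b=v^{-1}$, $c=w_0w^{-1}$ together with $\Phi_{w_0x}=\Phi_x^c$, and the observation that, modulo the partition~(\ref{eq:Phipart}), the defining relation $u\cdot\lambda+v\cdot\mu=w\cdot\nu^*$ is equivalent to~(\ref{eq:mu=0}), all check out. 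But this translation is precisely the easy half: after it, your Key Lemma (cup product nonzero iff $\Phi_{w^{-1}}=\Phi_{u^{-1}}\ccup\Phi_{v^{-1}}$) is, up to this same change of variables, literally the Dimitrov--Roth theorem being cited, so the whole content of the statement has been shifted into the Key Lemma.

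And the Key Lemma is not proved. Its ``only if'' half --- vanishing of the cup product when $\Phi_{u^{-1}}\cap\Phi_{v^{-1}}\neq\emptyset$ or when the disjoint union fails to equal $\Phi_{w^{-1}}$ --- which you yourself single out as the delicate point, rests on an unestablished multiplicativity of the Cousin/Kostant picture: that the cup product can be computed at the level of these resolutions and that the product of the two contributions ``lands in $\det N^\vee_{C^u/X}\wedge\det N^\vee_{C^v/X}$''. As written this is a heuristic rather than an argument: the cells $C^u$ and $C^v$ are disjoint strata of the same stratification when $u\neq v$, products of local-cohomology classes are only constrained to be supported on intersections of closures, and no mechanism is given that turns $\Phi_{u^{-1}}\cap\Phi_{v^{-1}}\neq\emptyset$ into actual vanishing of a map between irreducible $G$-modules (which, as you note, cannot be detected on a single vector). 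The parenthetical that this is ``equivalently'' the Belkale--Kumar Levi-movability dichotomy also cannot be invoked for free: identifying nonvanishing of these cup products of line-bundle cohomologies with Levi-movability is itself a nontrivial theorem, not a reformulation. So either cite \cite{DR:prv2} for the Key Lemma, as the paper does --- in which case your write-up is a correct but redundant translation --- or supply a genuine proof of the vanishing direction; as it stands, the essential step is missing.
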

 
\bigskip
{\bf The Belkale-Kumar product for complete flag manifolds.}
Consider the cohomology ring ${\rm H}^*(X,\ZZ)$.
For $w\in W$, we will denote by $\sigma_w$ the cycle class in cohomology
of $\overline{BwB/B}$. The Poincaré dual $\sigma^\vee$ of $\sigma_w$ is
$\sigma_{w_0w}$.
It is well known that ${\rm H}^*(X,\ZZ)=\bigoplus_{w\in W}\sigma_w$.
Along the way, we prove the following:

\begin{theo}
\label{th:bkprod1}
Let   $u,\,v$ and $w$ in $W$ such that $\Phi^+=\Phi_u\ccup\Phi_v\ccup\Phi_w$.
Then, we have:
$$
\sigma_u^\vee\cdot\sigma_v^\vee\cdot\sigma_w^\vee=\sigma_e.
$$
\end{theo}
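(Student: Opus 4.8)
The plan is to reduce the statement to the equality of a single Schubert structure coefficient with $1$, to obtain the inequality $\geq 1$ for soft reasons, and then to deduce $\leq 1$ from the rigidity of the decomposition $\Phi^+=\Phi_u\ccup\Phi_v\ccup\Phi_w$. Since $|\Phi_x|=l(x)$, the hypothesis forces $l(u)+l(v)+l(w)=|\Phi^+|=\dim X$, and, as $\sigma_x^\vee=\sigma_{w_0x}$ has codimension $l(x)$, the product $\sigma_u^\vee\cdot\sigma_v^\vee\cdot\sigma_w^\vee$ lies in the one-dimensional group $\H^{2\dim X}(X,\ZZ)=\ZZ\,\sigma_e$. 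Thus $\sigma_u^\vee\cdot\sigma_v^\vee\cdot\sigma_w^\vee=c\,\sigma_e$ with $c=\int_X\sigma_{w_0u}\cdot\sigma_{w_0v}\cdot\sigma_{w_0w}\in\NN$; pairing $\sigma_{w_0u}\cdot\sigma_{w_0v}$ against the Poincar\'e dual basis element $\sigma_{w_0w}=\sigma_w^\vee$ identifies $c$ with the coefficient of $\sigma_w$ in the Schubert expansion of $\sigma_{w_0u}\cdot\sigma_{w_0v}$. One has to show $c=1$.

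That $c\geq 1$ means this cup product is non zero. For instance, take flags $F_1,F_2,F_3$ of $X$ in general position and, using (\ref{eq:Phipart}), build a flag $F$ whose relative positions with the $F_i$ are those defining the Schubert cells of (conjugates of) $\overline{B^-uB/B}$, $\overline{Bw_0vB/B}$ and $\overline{Bw_0wB/B}$. Identifying $T_FX$ with $\bigoplus_{\beta\in\Phi^+}\lg_{-\beta}$, the tangent spaces at $F$ of these three cells are the coordinate subspaces supported on $\Phi_v\ccup\Phi_w$, $\Phi_u\ccup\Phi_w$ and $\Phi_u\ccup\Phi_v$; they are in transverse position precisely because the $\Phi_\bullet$ partition $\Phi^+$. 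So $F$ is an isolated reduced point of the intersection of the three Schubert varieties, whence $c\geq 1$. (Equivalently this is the non zero direction of Theorem~\ref{th:concret}; condition (\ref{eq:Phipart}) is exactly the Levi--movability condition --- equivalently the well covering condition of \cite{GITEigen} --- for this triple.)

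The heart is the bound $c\leq 1$, that is, the uniqueness of $F$. Since $\Phi_u\subseteq\Phi^+\smallsetminus\Phi_v=\Phi_{w_0v}$ one has $u\leq w_0v$, so $R:=\overline{B^-uB/B}\cap\overline{Bw_0vB/B}$ is an irreducible Richardson variety of dimension $l(w)$ with $[R]=\sigma_{w_0u}\cdot\sigma_{w_0v}$. By Kleiman's transversality theorem, for generic $g\in G$ the set $R\cap g\,\overline{Bw_0wB/B}$ is reduced, finite, of cardinality $c$, and a dimension count shows it avoids the boundary $R\smallsetminus R^\circ$ and so lies in the smooth open Richardson cell $R^\circ$. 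It remains to see this set has a single element. Computing the normal spaces of the three Schubert varieties at a point $p$ of the intersection, and moving the configuration to a standard one, one invokes Levi--movability for $X=G/B$ (the Levi subgroup then being $T$, so that Levi--movability collapses exactly to (\ref{eq:Phipart})) to identify $T_pX\cong\bigoplus_{\beta\in\Phi^+}\lg_{-\beta}$ so that these normal spaces become the coordinate subspaces supported on $\Phi_u$, $\Phi_v$ and $\Phi_w$; by (\ref{eq:Phipart}) they are in direct sum with sum $T_pX$, the intersection is transverse at $p$, and the reconstruction of $p$ from $(F_1,F_2,F_3)$ is dictated by this decomposition, which leaves no freedom once $\Phi^+$ has been partitioned. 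Hence $c=1$.

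The step I expect to be the main obstacle is this last one: Schubert varieties are singular at their torus fixed points, so transversality --- hence uniqueness of $p$ --- cannot be read off naively at a standard configuration, and controlling this is precisely the purpose of Levi--movability. Alternatively, one can route the argument through the GIT description of the regular faces of $\lr(G)$ of \cite{GITEigen,GITEigen2}: (\ref{eq:Phipart}) places the triple on a regular face of the minimal dimension $2r$, which forces the associated GIT quotient to be a single reduced point, and so $c=1$. In all cases the genuine content is to upgrade ``$c\geq 1$'' to ``$c=1$'' using the rigidity of the partition.
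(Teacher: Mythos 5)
Your reduction and the lower bound are fine: with $\sigma_x^\vee=\sigma_{w_0x}$ the product lives in the top degree, and the partition hypothesis~(\ref{eq:Phipart}) is exactly Levi-movability of $(w_0u,w_0v,w_0w)$ for $P=B$, so Lemma~\ref{lem:fondBK} (the $T$-stable tangent spaces are coordinate subspaces meeting in $0$) gives $c\geq 1$ --- although your phrasing ``take $F_1,F_2,F_3$ in general position and build $F$ in the prescribed relative positions'' is circular as written, since the existence of such an $F$ for a general configuration is the nonvanishing you are proving; the correct soft statement is precisely Belkale--Kumar's criterion at the standard configuration. The genuine gap is the step $c\leq 1$. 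What you actually establish there is that at \emph{each} point $p$ of a generic triple intersection the three tangent (or normal) spaces are transverse; but Kleiman already guarantees this for \emph{any} triple with nonzero product, and transversality at every point only says the intersection is finite and reduced of cardinality $c$ --- it can never bound $c$ by $1$ (ordinary triples with intersection number $2$ are transverse at both points). The sentence ``the reconstruction of $p$ from $(F_1,F_2,F_3)$ is dictated by this decomposition, which leaves no freedom'' is an assertion, not an argument: the identification $T_pX\cong\bigoplus_{\beta\in\Phi^+}\lg_{-\beta}$ depends on the point $p$ and on the group elements used to ``move the configuration to a standard one,'' so nothing in what you wrote excludes several such $p$. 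This uniqueness is the whole content of the theorem, and you have not supplied an idea that produces it.

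Your fallback route through the eigencone does not repair this, because it is circular relative to this paper: the description of the minimal regular faces recalled in Theorem~\ref{th:regext} (from \cite{GITEigen,GITEigen2}) lists $\sigma_u\cdot\sigma_v\cdot\sigma_w=\sigma_e$ as a separate condition alongside the partition of the $\Phi^c_\bullet$, and the face parametrization there is by \emph{well covering} pairs, whose definition already includes the birationality of $\eta$ --- i.e.\ exactly the multiplicity-one statement at stake. Deducing from~(\ref{eq:Phipart}) alone that the triple sits on a $2r$-dimensional regular face is precisely the implication that this paper derives \emph{from} Theorem~\ref{th:bkprod}, not the other way around. For comparison, the paper's proof gets uniqueness by a different mechanism: Levi-movability makes the tangent map of $\eta\colon G\times_BC^+\to X$ invertible along $C^+$, so $\eta$ is an unramified covering of degree $d$, and one computes the fiber over the \emph{special} $T$-fixed point $z=(u^{-1}B,v^{-1}B,w^{-1}B)$ itself rather than over a generic point: a limit argument ($\lim_{t\to 0}\lambda(t)g^{-1}z=z$ plus \cite[Lemma~12]{GITEigen}) identifies $\eta^{-1}(z)$ with $G_z\cdot B/B$, and an explicit argument shows $G_z$ is connected, so this fiber is a single point and $d=1$. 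Some such global input (degree computed at the most degenerate configuration, plus connectedness of the stabilizer) is what your proposal is missing.
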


In \cite{BK}, Belkale-Kumar defined a new product on  ${\rm H}^*(X,\ZZ)$. 
Theorem~\ref{th:bkprod1} actually asserts that the structure coefficients of this product
 in the Schubert basis are zero or one. It allows to compute very easily in this ring.
Note that particular cases of Theorem~\ref{th:bkprod1} were obtained in 
\cite{Richmond:recursion,Rich:mult,multi}. The question to know if Theorem~\ref{th:bkprod1}
holds was explicitly asked in \cite{DR:prv1} and \cite{multi}.

\bigskip
In this paper, we are interested in the question of the existence of non-zero $G$-invariant vectors
in the tensor product of three irreducible $G$-modules.
All the results can be easily generalized to the case of the tensor product of  $s$ such $G$-modules,
for any $s\geq 3$.
\section{GIT cones}

\subsection{Definitions}

Let $X$ be a smooth irreducible projective variety endowed with an algebraic action of $G$.
We assume that the group $\Pic^G(X)$ of $G$-linearized line bundles on $X$
has finite rank. In this work, $X$ will always be  a product of flag manifolds of $G$.
We consider the following semigroup:
\begin{eqnarray}
  \label{eq:TCG}
  \TC^G(X)=\{\Li\in\Pic^G(X)\;:\;{\rm H}^0(X,\Li)^G\neq\{0\}\}.
\end{eqnarray}
The Borel-Weil theorem allows to identify $\TC^G((G/B)^3)$ with $\LR(G)$.
We will denote by $\tc^G(X)$ the cone generated by $\TC^G(X)$ in 
$\Pic^G(X)_\QQ=\Pic^G(X)\otimes\QQ$.
The set of ample $G$-linearized line bundles on $X$ generates an open convex cone 
$\Pic^G(X)^{++}_\QQ$   in $\Pic^G(X)_\QQ$. We set:
\begin{eqnarray}
  \label{eq:lrG}
  \ac^G(X)=\Pic^G(X)^{++}_\QQ\cap\tc^G(X).
\end{eqnarray}
For example, $\tc^G((G/B)^3)$ is $\lr(G)$ and  $\ac^G((G/B)^3)$ is the intersection 
of $\lr(G)$ with the interior of the dominant chamber of $X(T^3)_\QQ$. 

\bigskip
For any $\Li\in\Pic^G(X)$, 
we set 
$$
X^{\rm ss}(\Li)=\{x\in X \,:\,\exists n>0{\rm\ and\ }\sigma\in{\rm H}^0(X,\Li^{\otimes n})^G 
\  {\rm s.\,t.}\ \sigma(x)\neq 0\}.
$$
Note that this definition of $X^{\rm ss}(\Li)$ is like in \cite{GIT} if $\Li$ is ample
but not in general.
We consider the following projective variety:
$$
X^{\rm ss}(\Li)\quot G:={\rm Proj}\bigoplus_{n\geq 0}{\rm H}^0(X,\Li^{\otimes n})^G,
$$
and the natural $G$-invariant morphism
$$
\pi\,:\,X^{\rm ss}(\Li)\longto X^{\rm ss}(\Li)\quot G.
$$
If $\Li$ is ample $\pi$ is a good quotient.

\subsection{Covering pairs}

\paragraphe
Let $P$ be a parabolic subgroup of $G$ containing $B$.
Le $W_P$ denote the Weyl group of $P$ and
 $W^P$ denote the set of minimal length representatives of elements in $W/W_P$.
For $u\in W^P$, we will denote by $\sigma_u^P$ the cycle class in
 ${\rm H}^{2\dim(G/P)-2l(u)}(G/P,\ZZ)$ of $\overline{BuP/P}$.
Let us consider the tangent space $T_u$ of
$u^{-1}BuP/P$ at the point $P$. 

Using Kleiman's transversality theorem, Belkale-Kumar showed in \cite[Proposition~2]{BK} the 
following important lemma:

\begin{lemma}\label{lem:fondBK}
Let now $u,\,v$ and $w$ in $W^P$ such that $l(u)+l(v)+l(w)=\dim G/P$.
  The product $\sigma^P_u\cdot\sigma^P_v\cdot\sigma^P_w$ is non zero if and only if 
there exist $p_1,p_2,p_3\in P$ such that 
the natural map
$$
T_P(G/P)\longto \frac{T_P(G/P)}{p_1T_u}\oplus \frac{T_P(G/P)}{p_2T_v}\oplus \frac{T_P(G/P)}{p_3T_w},
$$
is an isomorphism.
\end{lemma}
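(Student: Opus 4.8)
The plan is to convert the non-vanishing of the triple Schubert product into the existence of a transverse intersection point of three \emph{generic} translates of the Schubert varieties, via Kleiman's transversality theorem, and then to transport such a point to the base coset $P$ and read off the resulting condition on tangent spaces, which is what yields $p_1,p_2,p_3$. Write $(w_1,w_2,w_3)=(u,v,w)$ and $X_{w_i}:=\overline{Bw_iP/P}$, which represents $\sigma^P_{w_i}$. By the length hypothesis, $\sigma^P_u\cdot\sigma^P_v\cdot\sigma^P_w$ lies in the top cohomology group ${\rm H}^{2\dim G/P}(G/P,\ZZ)$, hence equals $N\cdot[\mathrm{pt}]$ for an integer $N\geq 0$, and the same hypothesis says that the expected dimension of a triple intersection $g_1X_u\cap g_2X_v\cap g_3X_w$ is zero. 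By Kleiman's transversality theorem on the homogeneous space $G/P$ --- applied first to the smooth loci of the $X_{w_i}$, then to their singular loci, whose codimensions are strictly larger so that a generic triple intersection avoids them --- there is a dense open subset of $G^3$ over which $g_1X_u\cap g_2X_v\cap g_3X_w$ is a reduced finite set of exactly $N$ points, at each of which the three translates meet transversally, i.e.\ $\bigcap_i T_x(g_iX_{w_i})=\{0\}$; equivalently, the natural map $T_x(G/P)\to\bigoplus_i T_x(G/P)/T_x(g_iX_{w_i})$ is an isomorphism. Therefore $\sigma^P_u\cdot\sigma^P_v\cdot\sigma^P_w\neq 0$ if and only if there exist $g_1,g_2,g_3\in G$ and a point $x$ lying in the smooth locus of each $g_iX_{w_i}$ at which the three translates meet transversally.

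For the ``only if'' direction, start from such $x$ and $g_i$. Since $G$ acts transitively on $G/P$, replace each $g_i$ by $hg_i$ with $hx=P$, so that we may assume $x=P$ and $g_i^{-1}P$ lies in the smooth locus of $X_{w_i}$. That smooth locus is dense, open and $B$-stable in $X_{w_i}$, hence contains the open cell $Bw_iP/P$; after a further harmless perturbation of $g_i$ we may therefore assume $g_i^{-1}P=b_iw_iP$, i.e.\ $g_i=p_i^{-1}w_i^{-1}b_i^{-1}$ for some $b_i\in B$ and $p_i\in P$. Near the point $b_iw_iP$ of the open cell, $X_{w_i}$ coincides with $Bw_iP/P$, and since that cell is $B$-stable its tangent space there equals $d(b_iw_i)(T_{w_i})$ with $T_{w_i}=T_P(w_i^{-1}Bw_iP/P)$ as in the statement; transporting by $g_i$ and using $g_ib_iw_i=p_i^{-1}$ gives
$$
T_P(g_iX_{w_i})=d(g_ib_iw_i)(T_{w_i})=p_i^{-1}T_{w_i}.
$$
Renaming $p_i^{-1}$ as $p_i$, the transversality $\bigcap_i T_P(g_iX_{w_i})=\{0\}$ becomes $\bigcap_i p_iT_{w_i}=\{0\}$, i.e.\ the natural map $T_P(G/P)\to\bigoplus_i T_P(G/P)/p_iT_{w_i}$ is an isomorphism.

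Conversely, given $p_1,p_2,p_3\in P$ for which that map is an isomorphism, set $g_i^\circ:=p_iw_i^{-1}$. Then $(g_i^\circ)^{-1}P=w_iP$ lies in the open cell of $X_{w_i}$, so $P$ belongs to $g_1^\circ X_u\cap g_2^\circ X_v\cap g_3^\circ X_w$ and is a smooth point of each factor, and the computation above with $b_i=e$ gives $T_P(g_i^\circ X_{w_i})=p_iT_{w_i}$, whence $\bigcap_i T_P(g_i^\circ X_{w_i})=\{0\}$ and the three translates meet transversally at $P$. By openness of transversality this persists for every $(g_1,g_2,g_3)$ close to $(g_1^\circ,g_2^\circ,g_3^\circ)$; choosing such a triple that also lies in the dense open subset of $G^3$ furnished by Kleiman's theorem --- possible since $G^3$ is irreducible --- we conclude that $g_1X_u\cap g_2X_v\cap g_3X_w$ is a reduced set of exactly $N$ points, at least one of them near $P$, so $N\geq 1$ and the product does not vanish. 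The step I expect to be the main obstacle is the bookkeeping in the middle paragraph: one has to make sure that, after moving the intersection point to $P$ and into the open Schubert cells, the element conjugating each tangent space can be taken inside $P$ and not merely in $G$. This is exactly where one exploits that $X_{w_i}$ is locally the $B$-orbit $Bw_iP/P$ near its open cell and that this cell is $B$-stable, so the $B$-part $b_i$ of $g_i$ is absorbed and only a $P$-coset ambiguity remains --- which is what turns a geometric transversality condition on $G/P$ into the stated condition phrased purely through the action of the parabolic $P$ on $T_P(G/P)$.
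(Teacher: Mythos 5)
Your proof is correct and follows exactly the route the paper attributes to this result, namely Belkale--Kumar's Proposition~2 via Kleiman's transversality theorem (the paper itself only cites it and gives no proof): generic translates give a finite reduced transverse intersection, and moving an intersection point to the base point $P$ inside the open cells converts transversality into the condition $\bigcap_i p_iT_{w_i}=\{0\}$, with the converse by persistence of a transverse intersection point under perturbation to a Kleiman-generic triple. Note that you implicitly (and correctly) read the length hypothesis as the codimensions of the $\overline{Bw_iP/P}$ summing to $\dim G/P$, i.e.\ $l(u)+l(v)+l(w)=2\dim G/P$ in the paper's conventions, which is the only reading consistent with the isomorphism condition in the statement.
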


Then, Belkale-Kumar defined Levi-movability:\\

\begin{defin}
The triple $(u,\,v,\,w)$ is said to be {\it Levi-movable} if there exist
 $l_1,l_2,l_3\in L$ such that the natural map
$$
T_P(G/P)\longto \frac{T_P(G/P)}{l_1T_u}\oplus \frac{T_P(G/P)}{l_2T_v}\oplus \frac{T_P(G/P)}{l_3T_w},
$$
is an isomorphism.
\end{defin}

We define $c_{uvw}\in\ZZ_{\geq 0}$ by
$$
\sigma^P_{u}.\sigma^P_{v}=\sum_{w\in W^P} c_{uvw}(\sigma^P_w)^\vee,
$$
where $(\sigma^P_w)^\vee$ denotes the Poincaré dual class of $\sigma^P_w$.
Belkale-Kumar set:
$$
c_{uvw}^\kbprod=\left\{
  \begin{array}{ll}
c_{uvw}&{\rm \ if\ }  (u,\,v,\,w) {\rm\ is\ Levi-movable;}\\
    0&{\rm\ otherwise.}
  \end{array}
\right .
$$

They define on the group ${\rm H}^*(G/P,\ZZ)$ a bilinear product $\kbprod$ by the formula:
$$
\sigma^P_{u}\kbprod\sigma^P_{v}=\sum_{w\in W^P} c_{uvw}^\kbprod(\sigma^P_w)^\vee.
$$
By \cite[Definition~18]{BK}, we have:

\begin{theo}
  The product $\kbprod$ is commutative, associative and satisfies Poincar\'e duality.
\end{theo}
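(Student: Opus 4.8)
The plan is to follow the argument of Belkale--Kumar \cite[Definition~18]{BK}, handling the three properties in order of increasing difficulty. Commutativity is soft: since ${\rm H}^*(G/P,\ZZ)$ is concentrated in even degrees the cup product is commutative, and Poincar\'e duality for it lets one rewrite $c_{uvw}$ as the triple intersection number $\int_{G/P}\sigma^P_u\cdot\sigma^P_v\cdot\sigma^P_w$, which is symmetric under permutations of $(u,v,w)$; the property of $(u,v,w)$ being Levi-movable is manifestly permutation-invariant, so $c^{\kbprod}_{uvw}$ is symmetric and $\kbprod$ is commutative.

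For Poincar\'e duality I would take as $\kbprod$-dual of $\sigma^P_u$ its ordinary Poincar\'e dual $\sigma^P_{u^\vee}$ (so $(\sigma^P_u)^\vee=\sigma^P_{u^\vee}$). The coefficient of the class of a point in $\sigma^P_u\kbprod\sigma^P_v$ equals $\int_{G/P}\sigma^P_u\cdot\sigma^P_v=\delta_{v,u^\vee}$ when the triple $(u,v,w)$ with $\sigma^P_w=[G/P]$ is Levi-movable, and is $0$ otherwise; so Poincar\'e duality for $\kbprod$ reduces to showing that $(u,u^\vee,w)$ is Levi-movable for every $u\in W^P$, where $w$ indexes the fundamental class --- equivalently, that $[G/P]$ is a unit for $\kbprod$. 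I would read this off Lemma~\ref{lem:fondBK}: the tangent space attached to the fundamental class is all of $T_P(G/P)$, so the corresponding summand in that lemma vanishes and the criterion becomes the splitting $T_P(G/P)=l_1T_u\oplus l_2T_{u^\vee}$ for suitable $l_1,l_2\in L$, possible because $T_u$ and $T_{u^\vee}$ have complementary dimensions and can be put in general position by the Levi.

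Associativity is the real content, and I would obtain it by realising $\kbprod$ as a degeneration of a genuine commutative associative ring. Let $\alpha_1,\dots,\alpha_m$ be the simple roots of $G$ not lying in $L$, and fix a one-parameter subgroup $\delta$ of the centre of $L$ with $P=P(\delta)$; it induces a $\ZZ$-grading on $T_P(G/P)=\lg/\lp$. In the ring $R={\rm H}^*(G/P,\ZZ)\otimes\ZZ[t_1,\dots,t_m]$ (the cup product extended $\ZZ[t_1,\dots,t_m]$-linearly) I would rescale the Schubert basis by monomials $\tilde\sigma_u=\big(\prod_i t_i^{a_i(u)}\big)\sigma^P_u$, the exponents $a_i(u)$ being built from the $\delta$-weight of the subspace $T_u$, so that in the basis $(\tilde\sigma_u)$ the structure constants of the cup product lie in $\ZZ[t_1,\dots,t_m]$, i.e.\ have no negative exponents. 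The weight count making this work is exactly Lemma~\ref{lem:fondBK}, which moreover shows that the constant term of such a structure constant is nonzero --- and then equal to the relevant $c_{uvw}$ --- precisely when the triple involved is Levi-movable, and is divisible by some $t_i$ otherwise. Consequently reduction modulo $(t_1,\dots,t_m)$ is a surjective ring homomorphism from $R$ onto a free $\ZZ$-module whose induced product, read off in the images of the $\tilde\sigma_u$, is exactly $\kbprod$; as a quotient of the commutative associative ring $R$, the product $\kbprod$ is commutative and associative, which together with the previous paragraph proves the theorem.

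The step I expect to be the main obstacle is this weight bookkeeping: choosing the exponents $a_i(u)$ correctly and verifying that they make the rescaled cup-product structure constants polynomial with vanishing order governed exactly by Levi-movability --- in other words, translating the linear-algebra isomorphism criterion of Lemma~\ref{lem:fondBK} into the statement that a certain $\delta$-weight of $T_u\oplus T_v\oplus T_w$ inside $T_P(G/P)$ is extremal. Once that dictionary is set up, all three assertions follow by formal manipulation.
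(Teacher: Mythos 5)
The paper itself does not prove this statement---it is imported verbatim from Belkale--Kumar \cite[Definition~18]{BK}---so your proposal is in effect a reconstruction of their original argument, and your skeleton does follow their route: commutativity is formal, Poincar\'e duality reduces to Levi-movability of the triples pairing $\sigma^P_u$ with $\sigma^P_{u^\vee}$ against the fundamental class, and associativity is obtained by exhibiting $\kbprod$ as the specialization at $0$ of a deformed (rescaled) cup product with nonnegative exponents. The problem is that the two assertions you leave unverified are precisely where all the content of the theorem lies, so as a proof this has genuine gaps rather than omitted routine details.

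Concretely: (a) for associativity, the statement you defer as ``the main obstacle''---that the structure constants in the rescaled basis lie in $\ZZ[t_1,\dots,t_m]$ and that their constant terms are nonzero exactly for Levi-movable triples---\emph{is} the theorem; and as set up it cannot work verbatim, because a single one-parameter subgroup $\delta$ of the centre of $L$ yields one grading, hence one integer attached to each $T_u$, and cannot produce $m$ independent exponents $a_i(u)$. Belkale--Kumar use one coweight per simple root outside $L$ and prove nonnegativity by confronting the isomorphism criterion of Lemma~\ref{lem:fondBK} with the weights of these coweights on $\det T_u$, $\det T_v$, $\det T_w$; none of this is in your text. There is also a formal slip: reduction modulo $(t_1,\dots,t_m)$ must be applied to the $\ZZ[t_1,\dots,t_m]$-span of the classes $\tilde\sigma_u$ (which the nonnegativity statement makes a subring), not to all of $R$, since in $R/(t_1,\dots,t_m)R$ the image of $\tilde\sigma_u$ is zero whenever some $a_i(u)>0$ and the induced product on that quotient is just the cup product. (b) For Poincar\'e duality, ``complementary dimensions, so they can be put in general position by the Levi'' is not an argument: transversality under translates by a \emph{given} subgroup is exactly what Levi-movability demands and can perfectly well fail (for instance for two $L$-stable non-complementary subspaces). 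The correct justification is an explicit weight computation: $T_u$ and the translate of $T_{u^\vee}$ by a representative in $L$ of the longest element of the Weyl group of $L$ are $T$-stable with complementary sets of $T$-weights, hence intersect trivially. So the strategy is the right one, but to make this a proof you must supply the nonnegativity/vanishing-order lemma and the duality weight computation, i.e.\ the actual core of \cite{BK}.
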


Note that $T_u$ is stable by $T$. This implies that for $P=B$, $(u,\,v\,\,w)$ is 
Levi-movable if and only if 
\begin{eqnarray}
  \label{eq:levimovB1}
l(u)+l(v)+l(w)=2l(w_0),\ {\rm and}\\
  T_u\cap T_v\cap T_w=\{0\}.
\end{eqnarray}
Since the weights of $T$ in $T_u$ are precisely $-\Phi_u^c$, one can easily checks that
$(u,\,v\,\,w)$ is 
Levi-movable if and only if 
\begin{eqnarray}
  \label{eq:levimovB}
\Phi_u^c\ccup\Phi_v^c\ccup\Phi_w^c=\Phi^+.
\end{eqnarray}

\paragraphe
Let $H$ be a subtorus of $T$ and $C$ be an irreducible subvariety of the $H$-fixed point set
$X^H$ in $X$.
Let $\Li\in \Pic^G(X)$. There exists a unique character $\mu^\Li(C,H)$ of $H$ such that
\begin{eqnarray}
  \label{eq:muLH}
  h.\tilde{x}=\mu^\Li(C,H)(h^{-1})\tilde{x},
\end{eqnarray}
for any $h\in H$ and $\tilde{x}\in\Li$ over $C$.
Analogously, if $\lambda$ is a one parameter subgroup of $G$ and $C$ is an irreducible 
subvariety of $X^\lambda=X^{{\rm Im}\lambda}$, we will denote by $\mu^\Li(C,\lambda)$
the integer such that:
\begin{eqnarray}
  \label{eq:mulambda}
  \lambda(t)\tilde{x}=t^{-\mu^\Li(C,\lambda)}\tilde{x},
\end{eqnarray}
for all $t\in\CC^*$ and $\tilde{x}$ as above.

\bigskip
We will consider the  parabolic subgroup $P(\lambda)$ (see \cite{GIT}) defined by
\begin{eqnarray}
  \label{eq:Plambda}
  P(\lambda)=\{g\in G\;:\;\lim_{t\to 0}\lambda(t)g\lambda(t^{-1}) {\rm\ exists\ in\ }G\}.
\end{eqnarray}
We also denote by $G^\lambda$ the centralizer of $\lambda$ in $G$; it is a Levi subgroup
of $P(\lambda)$.
Now, $C$ is an irreducible component of $X^\lambda$. We denote by $C^+$ the
corresponding  Bia\l{}ynicki-Birula cell:
\begin{eqnarray}
  \label{eq:BiBi}
  C^+=\{x\in X\;:\;\lim_{t\to 0}\lambda(t)x\in C\}.
\end{eqnarray}
One can easily check that $C^+$ is $P(\lambda)$-stable. 
We consider the fiber product $G\times_{P(\lambda)}C^+$ and the morphism
\begin{eqnarray}
  \label{eq:eta}
  \begin{array}{cccc}
    \eta\;:&G\times_{P(\lambda)}C^+&\longto&X\\
&[g:x]&\longmapsto&g.x.
  \end{array}
\end{eqnarray}

\begin{defin}
  The pair $(C,\lambda)$ is said to be {\it generically finite} if 
$\eta$ is dominant with finite general fibers.
It is said to be {\it well generically finite} if 
it is generically finite and there exists a point $x\in C$ such that 
the tangent map of $\eta$ at $[e:x]$ is invertible.
It is said to be {\it well covering} if it is well generically finite and $\eta$ is birational.
\end{defin}

\paragraphe
Set $X=(G/B)^3$.
Let $\lambda$ be a dominant regular one parameter subgroup; $P(\lambda)=B$.
The group $\lambda$ has only isolated fixed points in $X$ parameterized by $W^3$.
Let $(u,\,v,\,w)\in W$ and $z=(u^{-1}B,\,v^{-1}B,\,w^{-1}B)\in X$.
Set $C=\{z\}$.
It is well known that $C^+=Bu^{-1}B\times Bv^{-1}B\times Bw^{-1}B\subset X$.
Consider now 
$$
\eta\;:\;G\times_BC^+ \longto X.
$$
Let $x=(g_1B,\,g_2B,\,g_3B)\in X$.
The projection $G\times_BC^+ \longto G/B$ induces an isomorphism between 
$\eta^{-1}(x)$ and $g_1BuB\cap g_2BvB\cap g_3BwB$.
With the Kleiman theorem, this implies that 

\begin{center}
\begin{tabular}{r@{\,}c@{\,}l}
  $(C,\lambda)$ is generically finite&$\iff$&
$\sigma_u.\sigma_v.\sigma_w=d[{\rm pt}]$ with $d>0$;\\
$(C,\lambda)$ is well generically finite&$\iff$&
$\sigma_u.\sigma_v.\sigma_w=d[{\rm pt}]$ with $d>0$, and\\
&&$\eta^{-1}(z)$ is finite;\\
$(C,\lambda)$ is well covering &$\iff$&
$\sigma_u.\sigma_v.\sigma_w=[{\rm pt}]$, and\\
&&$\eta^{-1}(z)=\{[e:z]\}$.\\

\end{tabular}
\end{center}
\subsection{PRV points in $\LR(G)$}

In this subsection, $X=(G/B)^3$. We have the following very easy lemma:

\begin{lemma}
\label{lem:prvC}
Let $(\lambda,\,\mu,\,\nu)$ be a triple of dominant weights.
Then, $(\lambda,\,\mu,\,\nu)$ has the PRV property if and only if there exists
an irreducible component $C$ of $X^T$ such that $\mu^{\Li_{(\lambda,\,\mu,\,\nu)}}(C,T)$ 
is trivial.
\end{lemma}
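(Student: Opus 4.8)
The plan is to unwind both sides of the claimed equivalence down to a statement about the weights of $T$ on a single line bundle, so that the lemma becomes essentially a bookkeeping identity. Recall that $X = (G/B)^3$ and the irreducible components $C$ of $X^T$ are exactly the isolated fixed points $z = (u^{-1}B, v^{-1}B, w^{-1}B)$ for $(u,v,w) \in W^3$. For such a $C = \{z\}$ and $\Li = \Li_{(\lambda,\mu,\nu)}$, the character $\mu^{\Li}(C,T)$ of $T$ defined by~(\ref{eq:muLH}) is computed from the definition of $\Li_\lambda$: since $B$ acts on the fiber of $\Li_\lambda$ over $B/B$ by $-\lambda$, the torus $T$ acts on the fiber of $\Li_\lambda$ over $u^{-1}B$ by $-u^{-1}\lambda$ (conjugate the linearization by a representative of $u^{-1}$ and use that $\Li_\lambda$ is $G$-linearized). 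Hence over the point $z$ the torus $T$ acts on the fiber of $\Li = \Li_\lambda \boxtimes \Li_\mu \boxtimes \Li_\nu$ by the character $-(u^{-1}\lambda + v^{-1}\mu + w^{-1}\nu)$, so
$$
\mu^{\Li_{(\lambda,\mu,\nu)}}(\{z\}, T) = u^{-1}\lambda + v^{-1}\mu + w^{-1}\nu .
$$

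With this computation in hand, the statement ``there exists an irreducible component $C$ of $X^T$ with $\mu^{\Li_{(\lambda,\mu,\nu)}}(C,T)$ trivial'' becomes exactly: there exist $u,v,w \in W$ with $u^{-1}\lambda + v^{-1}\mu + w^{-1}\nu = 0$. Renaming $u \mapsto u^{-1}$, etc., this is the same as the existence of $u',v',w' \in W$ with $u'\lambda + v'\mu + w'\nu = 0$, which is precisely Property~(\ref{pr:prv}), the PRV property. So after the weight computation, both directions of the equivalence are immediate; I would just note that the correspondence between fixed points and triples in $W^3$ is a bijection, so no triple is lost.

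The only genuinely substantive point is the claim that $T$ acts on the fiber of $\Li_\lambda$ over $u^{-1}B$ by $-u^{-1}\lambda$, and I would spell this out carefully since it is the heart of the ``very easy'' lemma. Pick $\dot u \in N(T)$ representing $u$. For $\tilde x$ in the fiber of $\Li_\lambda$ over $u^{-1}B = \dot u^{-1} \cdot (B/B)$, write $\tilde x = \dot u^{-1} \cdot \tilde y$ with $\tilde y$ in the fiber over $B/B$, using the $G$-linearization. Then for $t \in T$,
$$
t \cdot \tilde x = t \dot u^{-1} \cdot \tilde y = \dot u^{-1} (\dot u t \dot u^{-1}) \cdot \tilde y = \dot u^{-1} \cdot \bigl((u t) \cdot \tilde y\bigr) = (-\lambda)(ut)\, \dot u^{-1}\cdot\tilde y = (-u^{-1}\lambda)(t)\, \tilde x,
$$
where I used that $B$, hence $T$, acts on the fiber over $B/B$ by $-\lambda$, and the identity $(-\lambda)(utu^{-1}\cdot u) $ — more precisely $(-\lambda)(\dot u t \dot u^{-1}) = (u^{-1}(-\lambda))(t)$ by the definition of the $W$-action on characters. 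Comparing with~(\ref{eq:muLH}), which says $h\cdot\tilde x = \mu^{\Li}(C,H)(h^{-1})\tilde x$, gives $\mu^{\Li_\lambda}(\{u^{-1}B\}, T) = u^{-1}\lambda$. Summing the three tensor factors and invoking that a character is trivial iff it is $0$ finishes the proof. I expect the sign conventions in this last computation to be the main (mild) obstacle — one must be consistent about whether the $W$-action is $t \mapsto \dot u t \dot u^{-1}$ giving $u$ or $u^{-1}$ on characters — but once the conventions are pinned down there is nothing deeper going on.
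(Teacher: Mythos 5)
Your proof is correct and takes essentially the same route as the paper's: the paper's proof likewise identifies the irreducible components of $X^T$ with the points indexed by $W^3$ and records as a one-line ``direct computation'' that $\mu^{\Li_{(\lambda,\mu,\nu)}}$ at such a point is $\pm(u\lambda+v\mu+w\nu)$ (up to the parametrization of the fixed point), from which the equivalence with the PRV property is immediate. Your expanded fiber computation matches this, and the overall sign (on which your formula and the paper's displayed one differ, owing to conventions) is immaterial since only triviality of the character is used.
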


\begin{proof}
  The irreducible components of $X^T$ are the singletons $\{(uB,\,vB,\,wB)\}$ for
$u,\,v,\,w\in W$. Moreover, a direct computation shows that 
$$\mu^{\Li_{(\lambda,\,\mu,\,\nu)}}(\{(uB,\,vB,\,wB)\},T)=
-(u\lambda+v\mu+w\nu).$$
The lemma follows.
\end{proof}

\bigskip
We also make the following obvious observation:

\begin{lemma}
\label{lem:stabone}
  Let $(\lambda,\,\mu,\,\nu)$ be a point in $\LR(G)$ with the PRV property.
Then, $(\lambda,\,\mu,\,\nu)$ has stable multiplicity one if and only if 
$X^{\rm ss}(\Li_{(\lambda,\,\mu,\,\nu)})\quot G$ is a point.
\end{lemma}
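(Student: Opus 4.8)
\textbf{Proof plan for Lemma~\ref{lem:stabone}.}

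The plan is to identify the dimension of the invariant space $(V_{k\lambda}\otimes V_{k\mu}\otimes V_{k\nu})^G$ with $\dim \H^0(X,\Li_{(\lambda,\mu,\nu)}^{\otimes k})^G$ via the Borel--Weil identification $\TC^G((G/B)^3)\cong\LR(G)$ recalled earlier, so that the graded ring $R:=\bigoplus_{k\geq 0}\H^0(X,\Li_{(\lambda,\mu,\nu)}^{\otimes k})^G$ has Hilbert function $k\mapsto \dim(V_{k\lambda}\otimes V_{k\mu}\otimes V_{k\nu})^G$. By definition $X^{\rm ss}(\Li)\quot G = {\rm Proj}\,R$, so stable multiplicity one (Property~\ref{pr:stabone}) says exactly that $R_k$ is one-dimensional for all $k\geq 1$ — that is, $R$ is, in each positive degree, spanned by the $k$-th power of a fixed generator of $R_1$ (which is one-dimensional by the PRV property via the direction (i) of the PRV conjecture / Lemma~\ref{lem:prvC}, guaranteeing $R_1\neq 0$). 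A graded ring generated in degree one with one-dimensional homogeneous pieces is a polynomial ring in one variable, whose ${\rm Proj}$ is a single reduced point; conversely if ${\rm Proj}\,R$ is a point then $R$ has Krull dimension one, and I would argue that combined with $R_1\neq 0$ this forces $\dim R_k = 1$ for all large $k$, hence for all $k\geq 1$.

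The first step I would carry out is to pin down that $R$ is generated in degree one, or at least reduce to that case: since the relevant statement is about multiplicities, I would note that $R_k\neq 0$ for all $k\geq 1$ because $R_1\neq 0$ (PRV property $\Rightarrow$ the point lies in $\LR(G)$ with a nonzero invariant in degree one) and $R$ is a domain with $R_1\cdot R_k\subseteq R_{k+1}$, so $\dim R_{k+1}\geq \dim R_k\geq 1$. Thus the Hilbert function is nondecreasing and bounded below by $1$. The second step is the easy direction: if every $R_k$ is one-dimensional, then the map ${\rm Sym}(R_1)\to R$ is surjective, $R\cong\CC[t]$, and ${\rm Proj}\,R = {\rm Spec}\,\CC$ is a point. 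For the converse, the third step is to use that ${\rm Proj}\,R$ being a point means $\dim R = 1$ (Krull dimension), equivalently the Hilbert polynomial of $R$ is a constant $c\geq 1$; I then need to upgrade ``eventually constant equal to $c$'' to ``$c=1$ and constant from degree one on.'' Here I would invoke that $R$ is a domain together with the PRV stable-multiplicity-one input is \emph{not} yet available — rather, the point of the lemma is an equivalence, so in the converse direction I must derive stable multiplicity one purely from ${\rm Proj}\,R$ being a point plus the PRV property.

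The main obstacle is precisely this last implication: from ${\rm Proj}\,R = {\rm pt}$ and $R_1\neq 0$, conclude $\dim R_k = 1$ for \emph{every} $k\geq 1$, not merely asymptotically. To handle it I would argue as follows: pick $0\neq s\in R_1$; since ${\rm Proj}\,R$ is a point, the dehomogenization $R_{(s)} := (R[1/s])_0$ is a finitely generated $\CC$-algebra with ${\rm Spec}\,R_{(s)}$ a single point, hence $R_{(s)}$ is Artinian local; but $R$ is a domain, so $R[1/s]$ is a domain, so $R_{(s)}$ is a domain, forcing $R_{(s)} = \CC$. Therefore for every $k$ and every $f\in R_k$ we have $f/s^k\in R_{(s)} = \CC$, i.e. $f\in\CC s^k$, so $\dim R_k \leq 1$; combined with $R_k\neq 0$ from the first step this gives $\dim R_k = 1$ for all $k\geq 1$, which is stable multiplicity one. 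The only subtlety to check is that $R$ is indeed a domain — this holds because $\H^0(X,\bigoplus_k\Li^{\otimes k})$ is a domain (sections of powers of a line bundle on an irreducible variety) and $R$ is a subring of it — and that ${\rm Proj}$ of a \emph{graded domain} with ${\rm Proj} = \emptyset$ cannot occur here since $R_1\neq 0$; both are routine. This completes the equivalence, and I would write it up in the compact form: $X^{\rm ss}(\Li)\quot G$ is a point $\iff R$ is, up to normalization, $\CC[t]$ $\iff$ $\dim R_k\leq 1$ for all $k$ $\iff$ (given $R_1\neq 0$) stable multiplicity one.
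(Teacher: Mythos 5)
Your argument is correct. Note that the paper offers no proof at all here: the lemma is stated as an ``obvious observation,'' the point being exactly the identification you make, namely that $\dim(V_{k\lambda}\otimes V_{k\mu}\otimes V_{k\nu})^G=\dim\H^0(X,\Li_{(\lambda,\mu,\nu)}^{\otimes k})^G$ is the Hilbert function of the graded ring $R$ whose ${\rm Proj}$ is $X^{\rm ss}(\Li_{(\lambda,\mu,\nu)})\quot G$. Your write-up fills in the routine details soundly: the forward direction ($R\cong\CC[t]$, so ${\rm Proj}\,R$ is a point) is immediate, and your converse via $R_{(s)}$ being a finitely generated domain with one-point spectrum, hence equal to $\CC$, is a clean way to get $\dim R_k\le 1$ exactly (not just asymptotically); the only ingredients you invoke without proof --- that $R$ is a domain and finitely generated (the latter is where the hypothesis that the quotient is a projective variety, i.e.\ semi-ampleness of $\Li_{(\lambda,\mu,\nu)}$ plus reductivity of $G$, enters, and without which the localization argument would fail) and that $R_k\neq 0$ for all $k\ge 1$ since $(\lambda,\mu,\nu)\in\LR(G)$ --- are indeed routine in this setting.
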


\subsection{Cohomological points in $\LR(G)$}

We now recall \cite[Theorem~1]{DR:prv2}:

\begin{theo}\label{th:DR}
  Let $(\lambda,\,\mu,\,\nu)$ be a triple of dominant weights.
Then, $(\lambda,\,\mu,\,\nu)$ is a cohomological point of $\LR(G)$ if and only if 
there exist $u,\,v,\,w\in W$ such that 
\begin{enumerate}
\item $u^{-1}\lambda+v^{-1}\mu+w^{-1}\nu=0$, and
\item  $\Phi^+=\Phi_u\ccup\Phi_v\ccup\Phi_w$.
\end{enumerate}
\end{theo}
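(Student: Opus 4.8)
The plan is to reduce Theorem~\ref{th:DR} to a single statement about cup products on $X=G/B$, and then to prove that statement by Lie-algebra cohomology. Unwinding the definition of a cohomological point for $(\lambda,\mu,\nu)$ --- so that the $\nu$ appearing in~(\ref{eq:cup}) is played by $\nu^*$ --- Borel--Weil--Bott identifies the cup product in question with a $G$-morphism $V_\lambda^*\otimes V_\mu^*\longto{\rm H}^{l(w)}(X,\Li_{w\cdot\nu^*})$ for some $u,v,w\in W$ with $l(w)=l(u)+l(v)$ and $u\cdot\lambda+v\cdot\mu=w\cdot\nu^*$; and since $\nu^*+\rho=-w_0(\nu+\rho)$ is strictly dominant and $w\cdot\nu^*+\rho=w(\nu^*+\rho)$, the target equals $V_{\nu^*}^*=V_\nu$. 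I would then perform the change of variables $(u',v',w')=(u^{-1},v^{-1},w_0w^{-1})$, a bijection of $W^3$. Using $|\Phi_x|=l(x)$, the identity $\Phi_{w_0x}=\Phi^c_x$ and $\rho-x\rho=\sum_{\alpha\in\Phi_{x^{-1}}}\alpha$, one checks (i) that $\Phi^+=\Phi_{u'}\ccup\Phi_{v'}\ccup\Phi_{w'}$ is equivalent to $\Phi_{w^{-1}}=\Phi_{u^{-1}}\ccup\Phi_{v^{-1}}$ (and forces $l(w)=l(u)+l(v)$), and (ii) that, granting this, the identity $\sum_{\alpha\in\Phi^+}\alpha=2\rho$ gives $u'^{-1}\rho+v'^{-1}\rho+w'^{-1}\rho=\rho$, so the equation $u\cdot\lambda+v\cdot\mu=w\cdot\nu^*$ --- equivalently $u'^{-1}(\lambda+\rho)+v'^{-1}(\mu+\rho)+w'^{-1}(\nu+\rho)=\rho$ --- becomes exactly $u'^{-1}\lambda+v'^{-1}\mu+w'^{-1}\nu=0$. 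Hence Theorem~\ref{th:DR} follows once one proves: \emph{if $l(w)=l(u)+l(v)$ and $u\cdot\lambda+v\cdot\mu=w\cdot\nu^*$, then the cup product ${\rm H}^{l(u)}(X,\Li_{u\cdot\lambda})\otimes{\rm H}^{l(v)}(X,\Li_{v\cdot\mu})\longto{\rm H}^{l(w)}(X,\Li_{w\cdot\nu^*})$ is nonzero if and only if $\Phi_{w^{-1}}=\Phi_{u^{-1}}\ccup\Phi_{v^{-1}}$.}

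For this last statement I would pass to Lie-algebra cohomology via the proofs of Borel--Weil--Bott due to Bott and Kostant: the groups ${\rm H}^{l(x)}(X,\Li_{x\cdot\lambda})$ are computed by the Chevalley--Eilenberg complex of $\lu$ with coefficients in $V_\lambda$, the isomorphism is compatible with the cup product on the geometric side and the wedge-and-tensor product of cochains on the algebraic side, and --- by Kostant's theorem --- the relevant weight line is spanned by the harmonic cocycle of the form $\big(\bigwedge_{\alpha\in\Phi_{x^{-1}}}e_\alpha^*\big)\otimes v_{x\lambda}$, where $e_\alpha^*$ is dual to a root vector of $\lu$ and $v_{x\lambda}\in V_\lambda$ has the extremal weight $x\lambda$. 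Wedging the cocycles for $(u,\lambda)$ and $(v,\mu)$ yields $\pm\big(\bigwedge_{\alpha\in\Phi_{u^{-1}}\cup\Phi_{v^{-1}}}e_\alpha^*\big)\otimes(v_{u\lambda}\otimes v_{v\mu})$, which is zero as a cochain when $\Phi_{u^{-1}}\cap\Phi_{v^{-1}}\neq\emptyset$; and otherwise --- since the monomials $\bigwedge_{\alpha\in S}e_\alpha^*$ are orthogonal and Kostant's harmonic forms of degree $l(u)+l(v)$ are exactly the family $\{\bigwedge_{\alpha\in\Phi_{z^{-1}}}e_\alpha^*\}_{l(z)=l(u)+l(v)}$ --- its harmonic part is nonzero precisely when $\Phi_{u^{-1}}\ccup\Phi_{v^{-1}}=\Phi_{z^{-1}}$ for some $z$, which the weight relation $u\cdot\lambda+v\cdot\mu=w\cdot\nu^*$ forces to be $w$. (A more hands-on route is induction on $l(w)$ using the $\PP^1$-fibration $G/B\longto G/P_\alpha$ attached to a simple root $\alpha\in\Phi_{w^{-1}}$, with $P_\alpha\supseteq B$, together with the projection formula and Bott's theorem for $\PP^1$-bundles, tracking which of $\Phi_{u^{-1}},\Phi_{v^{-1}}$ contains $\alpha$.)

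I expect the principal obstacle to be the compatibility assertions of the second paragraph. Two things must be established. First, that the cup product on the cohomology of line bundles over $X$ really is intertwined with the wedge-and-tensor product on Chevalley--Eilenberg cochains --- this is formal but delicate, since the Dolbeault description entangles the $G$-action with the differential, and is probably best organized via explicit cocycles on the cover of $G/B$ by $W$-translates of the big cell. Second, and more seriously, that when $\Phi_{w^{-1}}=\Phi_{u^{-1}}\ccup\Phi_{v^{-1}}$ the $G$-equivariant map ${\rm H}^{l(w)}(X,\Li_{u\cdot\lambda+v\cdot\mu})=V_\nu$ really receives the class produced above nontrivially --- i.e. that the only obstruction to non-vanishing is the overlap $\Phi_{u^{-1}}\cap\Phi_{v^{-1}}\neq\emptyset$. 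I would handle this point geometrically, by representing the two Bott classes by cocycles supported near suitable Schubert varieties of codimensions $l(u)$ and $l(v)$ and applying Kleiman-style transversality --- in the spirit of Lemma~\ref{lem:fondBK} --- to see that their product restricts nontrivially to the codimension-$l(w)$ Schubert variety attached to $w$ exactly when the corresponding conormal spaces at the relevant point are complementary, which is the disjointness $\Phi_{u^{-1}}\cap\Phi_{v^{-1}}=\emptyset$. The remaining verifications --- the two combinatorial equivalences (i), (ii) and the bijectivity of the substitution --- are of the same elementary flavour as the inversion-set manipulations already used for Levi-movability.
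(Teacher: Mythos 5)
The paper itself contains no proof of Theorem~\ref{th:DR}: it is quoted from Dimitrov--Roth \cite[Theorem~1]{DR:prv2}, so what you propose is in effect a new proof of that external theorem. Your preliminary reduction is sound: the passage to the symmetric form via $(u',v',w')=(u^{-1},v^{-1},w_0w^{-1})$, using $\Phi_{w_0x}=\Phi^c_x$ and $\rho-x\rho=\sum_{\alpha\in\Phi_{x^{-1}}}\alpha$, is correct and of the same elementary flavour as the inversion-set manipulations in the proof of Theorem~\ref{th:main2}. But the statement you reduce to --- the cup product is nonzero exactly when $\Phi_{w^{-1}}=\Phi_{u^{-1}}\ccup\Phi_{v^{-1}}$ --- is the entire content of Dimitrov--Roth's theorem, and your argument for it does not hold up.

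Concretely, the Kostant-harmonic description you invoke is valid for an irreducible coefficient module (or for trivial coefficients, where the harmonics are the bare monomials $\bigwedge_{\alpha\in\Phi_{z^{-1}}}e^*_\alpha$). The product cocycle, however, lives in the complex computing ${\rm H}^*(\lu,V_\lambda\otimes V_\mu)$, whose coefficients are not irreducible: there the weight space of weight $w\cdot\nu$ in degree $l(w)$ is in general of dimension bigger than one, the harmonic elements are monomials tensored with specific, non-extremal vectors of $V_\lambda\otimes V_\mu$, and the nonzero cocycle $\bigl(\bigwedge_{\alpha\in\Phi_{u^{-1}}\ccup\Phi_{v^{-1}}}e^*_\alpha\bigr)\otimes(v_{u\lambda}\otimes v_{v\mu})$ may perfectly well be a coboundary. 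Deciding that it is not is essentially equivalent to the multiplicity-one PRV statement the whole paper is about; neither the orthogonality of monomials nor the weight relation settles it. The geometric fallback you sketch is not available either: classes in ${\rm H}^{l(u)}(X,\Li_{u\cdot\lambda})$ for a nontrivial line bundle are not represented by cycles or by cocycles ``supported near'' Schubert varieties, and Kleiman transversality as in Lemma~\ref{lem:fondBK} computes intersection numbers in ${\rm H}^*(X,\ZZ)$, not cup products in coherent cohomology. Even your vanishing direction is incomplete: showing that the product of the two chosen extremal cocycles is zero does not show that the $G$-equivariant map ${\rm H}^{l(u)}(X,\Li_{u\cdot\lambda})\otimes{\rm H}^{l(v)}(X,\Li_{v\cdot\mu})\to{\rm H}^{l(w)}(X,\Li_{w\cdot\nu})$ vanishes, since a nonzero $G$-map out of $V^*_\lambda\otimes V^*_\mu$ can kill a decomposable pair of extremal weight vectors. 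The $\PP^1$-fibration induction you mention in parentheses is closer to a workable route, but it is not carried out; as written, the heart of the theorem is asserted rather than proved.
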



\subsection{Regularly extremal points in $\LR(G)$}

We now recall a result from \cite{GITEigen,GITEigen2} which describes the regularly extremal
points in $\LR(G)$. Indeed, in \cite{GITEigen,GITEigen2}, we describe the minimal regular 
faces of $\lr(G)$, and the Kumar-Mathieu version of the PRV conjecture proves that $\LR(G)$ 
is saturated along these faces (that is, any triple of dominant weights which belongs to
$\lr(G)$ belongs to $LR(G)$).

\begin{theo}
  \label{th:regext}
Let  $(\lambda,\,\mu,\,\nu)$ be a triple of dominant weights.
Then, $(\lambda,\,\mu,\,\nu)$ is a regularly extremal point of $\LR(G)$ if and only if 
there exist $u,\,v,\,w\in W$ such that
\begin{enumerate}
\item $u^{-1}\lambda+v^{-1}\mu+w^{-1}\nu=0$,
\item $\Phi^+=\Phi_u^c\ccup\Phi_v^c\ccup\Phi_w^c$, and
\item $\sigma_u\cdot\sigma_v\cdot\sigma_w=\sigma_e$.
\end{enumerate}
\end{theo}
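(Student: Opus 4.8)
The plan is to deduce Theorem~\ref{th:regext} from the description of the minimal regular faces of $\lr(G)$ obtained in \cite{GITEigen,GITEigen2}, read through the dictionary between well-covering pairs on $X=(G/B)^3$ and Schubert calculus recalled at the end of Section~2.2. What those references provide is: the regular faces of $\lr(G)$ are parametrized by the well-covering pairs $(C,\lambda)$ of $X$, the face attached to $(C,\lambda)$ having dimension $\geq 2r$, with equality exactly when $\lambda$ is regular. So a regularly extremal point is exactly a point of some face attached to a well-covering pair $(C,\lambda)$ with $\lambda$ \emph{regular}; conjugating, I may take $\lambda$ dominant regular, whence $P(\lambda)=B$ and $C=\{z\}$ is a single fixed point $z=(u^{-1}B,\,v^{-1}B,\,w^{-1}B)$ for some $u,v,w\in W$ (the inverses being only for notational convenience). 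Moreover the face attached to such a pair is the set
$$
\{(\lambda_1,\lambda_2,\lambda_3)\in(X(T)_\QQ^+)^3\;:\;u^{-1}\lambda_1+v^{-1}\lambda_2+w^{-1}\lambda_3=0\},
$$
which by the Kumar-Mathieu theorem (the PRV conjecture applied with $u^{-1},v^{-1},w^{-1}$) consists entirely of points of $\lr(G)$, in fact of $\LR(G)$. Granting this input, $(\lambda,\mu,\nu)$ is regularly extremal if and only if there is a well-covering pair $(\{z\},\lambda)$ of the above shape with $u^{-1}\lambda+v^{-1}\mu+w^{-1}\nu=0$, i.e. with Assertion~(i) holding.

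So it remains to rewrite the well-covering condition on $(\{z\},\lambda)$ combinatorially. By the equivalences recalled at the end of Section~2.2, $(\{z\},\lambda)$ is well covering if and only if $\sigma_u\cdot\sigma_v\cdot\sigma_w=[{\rm pt}]$ and $\eta^{-1}(z)=\{[e:z]\}$, where $\eta:G\times_BC^+\longto X$ is the map~(\ref{eq:eta}) for $C^+=Bu^{-1}B/B\times Bv^{-1}B/B\times Bw^{-1}B/B$. Since $\sigma_e=[{\rm pt}]$, the first condition is Assertion~(iii). For the second, I first note that $\sigma_u\cdot\sigma_v\cdot\sigma_w=\sigma_e$ makes $\eta$ birational, so that, extending $\eta$ over $G\times_B\overline{C^+}$ (which is proper over the normal variety $X$) and using Zariski's main theorem, the fibre $\eta^{-1}(z)$ is connected; hence, given $\sigma_u\cdot\sigma_v\cdot\sigma_w=\sigma_e$, the equality $\eta^{-1}(z)=\{[e:z]\}$ holds if and only if the differential of $\eta$ at $[e:z]$ is an isomorphism. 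As explained in Section~2.2 this last condition is the Levi-movability of $(u,v,w)$, which by~(\ref{eq:levimovB}) is equivalent to $\Phi^+=\Phi_u^c\ccup\Phi_v^c\ccup\Phi_w^c$, i.e. to Assertion~(ii). Putting the two paragraphs together: $(\lambda,\mu,\nu)$ is regularly extremal if and only if there exist $u,v,w\in W$ satisfying (i), (ii) and (iii).

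The main obstacle is pinning down, and legitimately invoking, the exact input from \cite{GITEigen,GITEigen2}: that the minimal regular faces of $\lr(G)$ are precisely the faces attached to well-covering pairs with $\lambda$ regular, and that such a face coincides with the linear slice $\{u^{-1}\lambda_1+v^{-1}\lambda_2+w^{-1}\lambda_3=0\}$ of the dominant cone — this is where all the GIT content sits. On the elementary side, the point requiring a little care is the identification ``$\eta^{-1}(z)=\{[e:z]\}\ \Leftrightarrow\ (u,v,w)$ Levi-movable'', that is, the passage from an infinitesimal statement at $[e:z]$ to a statement about the whole fibre.
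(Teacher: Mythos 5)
The paper itself gives no proof of Theorem~\ref{th:regext}: it is recalled from \cite{GITEigen,GITEigen2} (the description of the minimal regular faces of $\lr(G)$ as the slices $u^{-1}\lambda_1+v^{-1}\lambda_2+w^{-1}\lambda_3=0$ attached to well covering pairs $(\{z\},\lambda)$ with $\lambda$ regular) combined with Kumar--Mathieu saturation, and your argument follows exactly this route, importing the same GIT input and adding only the dictionary already stated at the end of Section~2.2 together with~(\ref{eq:levimovB}), so it is essentially the paper's own treatment. Your one loose step --- deducing invertibility of $d\eta$ at $[e:z]$ from $\eta^{-1}(z)=\{[e:z]\}$, which the connectedness form of Zariski's main theorem you invoke does not give (it gives the converse; one needs the quasi-finite-birational-to-normal form) --- is in any case avoidable, since a well covering pair has invertible tangent map at $[e:z]$ by the very definition of well generically finite.
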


\section{The Belkale-Kumar product for complete flag manifolds}

\begin{theo}\label{th:bkprod}
  The non-zero structure coefficients  of the ring $({\rm H}^*(G/B,\ZZ),\kbprod)$
in the  Schubert basis are equal to $1$.
\end{theo}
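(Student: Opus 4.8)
The plan is to reduce Theorem~\ref{th:bkprod} (the Belkale--Kumar structure coefficients for $G/B$ are $0$ or $1$) to Theorem~\ref{th:bkprod1}, and then to prove Theorem~\ref{th:bkprod1} using the geometric dictionary between cohomological/regularly extremal points and the well-covering property set up in Section~2. First I observe that, by Poincar\'e duality for $\kbprod$, it is enough to show that for any $u,v,w\in W^B=W$ with $l(u)+l(v)+l(w)=2l(w_0)$, the coefficient $c^\kbprod_{uvw}$ is $0$ or $1$; and by the definition of $\kbprod$ together with~(\ref{eq:levimovB}), this coefficient is nonzero exactly when $\Phi_u^c\ccup\Phi_v^c\ccup\Phi_w^c=\Phi^+$, in which case it equals the ordinary intersection number $c_{uvw}$, i.e.\ the degree $d$ in $\sigma_u\cdot\sigma_v\cdot\sigma_w=d[\mathrm{pt}]$. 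Writing $u'=w_0u$, $v'=w_0v$, $w'=w_0w$ one has $\Phi_{u'}=\Phi_{w_0u}=\Phi^c_u$ (up to the standard bookkeeping relating inversions of $w_0u$ and of $u$), so the Levi-movability condition~(\ref{eq:levimovB}) becomes $\Phi^+=\Phi_{u'}\ccup\Phi_{v'}\ccup\Phi_{w'}$, which is exactly the hypothesis~(\ref{eq:Phipart}) of Theorem~\ref{th:bkprod1}; and $\sigma_u^\vee=\sigma_{w_0u}=\sigma_{u'}$, etc. Hence Theorem~\ref{th:bkprod1} says precisely $\sigma_{u'}\cdot\sigma_{v'}\cdot\sigma_{w'}=\sigma_e=[\mathrm{pt}]$, i.e.\ $d=1$, which is what we need.

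So the real content is Theorem~\ref{th:bkprod1}: given $u,v,w\in W$ with $\Phi^+=\Phi_u\ccup\Phi_v\ccup\Phi_w$, show $\sigma_u^\vee\cdot\sigma_v^\vee\cdot\sigma_w^\vee=\sigma_e$, equivalently (after the $w_0$-translation above) that a triple whose complementary inversion sets partition $\Phi^+$ has intersection number exactly one. My approach is to work on $X=(G/B)^3$ with a dominant regular one-parameter subgroup $\lambda$, so $P(\lambda)=B$, and to use the fixed point $z=(u^{-1}B,v^{-1}B,w^{-1}B)$ with $C=\{z\}$, $C^+=Bu^{-1}B\times Bv^{-1}B\times Bw^{-1}B$. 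By the table at the end of Section~2.2, it suffices to prove that $(C,\lambda)$ is \emph{well covering}: $\eta\colon G\times_B C^+\to X$ is birational with $\eta^{-1}(z)=\{[e:z]\}$. The weights of $T$ on the tangent space at $[e:z]$ of $G\times_B C^+$ are $-\Phi_e = \Phi^+$ on the $G/B$-factor (coming from $\mathfrak u^-$) together with $-\Phi^c_u,-\Phi^c_v,-\Phi^c_w$ on the three $C^+$-factors; the weights on $T_zX$ are $-\Phi_u,-\Phi_v,-\Phi_w$; the hypothesis $\Phi^+=\Phi_u\ccup\Phi_v\ccup\Phi_w$ (equivalently $\Phi^c_u\ccup\Phi^c_v\ccup\Phi^c_w = 2\Phi^+$ as multisets, since each $\Phi^c_x=\Phi^+\setminus\Phi_x$) makes the tangent map at $[e:z]$ an isomorphism by a direct weight count. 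Thus $\eta$ is dominant, generically finite, and unramified at $[e:z]$; it remains to upgrade this to birationality and to the scheme-theoretic statement $\eta^{-1}(z)=\{[e:z]\}$.

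The main obstacle is exactly this last upgrade — passing from "$\eta$ is generically finite and \'etale at one point" to "$\eta$ is birational and the fiber over $z$ is a single reduced point". Here I expect to use an explicit transversality/triple-intersection argument: identifying $\eta^{-1}(z)$ with $u^{-1}BuB\cap v^{-1}BvB\cap w^{-1}BwB$ inside $G/B$ (from the description of fibers of $\eta$ in Section~2.2 applied at $x=z$), and showing that under the partition hypothesis these three Schubert cells meet transversally in the single point $eB$. Concretely, one can parametrize a neighborhood of $eB$ in $G/B$ by $\prod_{\alpha\in\Phi^+}\mathfrak g_{-\alpha}$ and check that the three cell-conditions cut out, respectively, the vanishing of the coordinates indexed by $\Phi_u$, by $\Phi_v$, and by $\Phi_w$ — because $u^{-1}BuB/B$ is tangent to $\bigoplus_{\alpha\in\Phi^c_u}\mathfrak g_{-\alpha}$ and, more strongly, is cut out near $eB$ by the equations for $\Phi_u$ up to higher order, which the partition makes exact. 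Since $\Phi_u\ccup\Phi_v\ccup\Phi_w=\Phi^+$, the intersection is cut out by all coordinates vanishing, giving the reduced point $\{eB\}$; this simultaneously forces $d=1$, hence $\sigma_u\cdot\sigma_v\cdot\sigma_w=[\mathrm{pt}]$, hence (via $w_0$) Theorem~\ref{th:bkprod1}, and then Theorem~\ref{th:bkprod} follows as above. An alternative, if the direct coordinate computation is delicate, is to invoke Theorem~\ref{th:DR} and Theorem~\ref{th:regext} together with the already-established Theorem~\ref{th:main}: a triple $(\lambda,\mu,\nu)$ satisfying Theorem~\ref{th:concret} is cohomological, hence regularly extremal, and condition~(iii) of Theorem~\ref{th:regext} gives $\sigma_u\cdot\sigma_v\cdot\sigma_w=\sigma_e$ directly — but since Theorem~\ref{th:bkprod1} is announced as proved "along the way," I would prefer the self-contained geometric argument above and reserve the logical dependencies on the main theorem for the converse directions.
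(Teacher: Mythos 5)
Your setup is the same as the paper's (the map $\eta\colon G\times_BC^+\to X$ with $z=(u^{-1}B,v^{-1}B,w^{-1}B)$, and Levi-movability to invert the tangent map at $[e:z]$), but the decisive step of your argument does not close. Even if your coordinate computation at $eB$ were carried out correctly, all it could show is that $\eta^{-1}(z)$ is a single reduced point. That does \emph{not} ``simultaneously force $d=1$'': a dominant map which is \'etale at the relevant points can have a one-point reduced fiber over a special point while having generic degree greater than one (restrict $t\mapsto t^2$ to ${\mathbb A}^1\setminus\{1\}$ over ${\mathbb A}^1$: the fiber over $1$ is the single reduced point $-1$). To convert knowledge of the special fiber into knowledge of the degree you must first know that $d$ is computed by that fiber. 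This is exactly what the paper establishes before touching the fiber: invertibility of $d\eta$ propagates from $[e:z]$ to \emph{every} point of $C^+$ (via Levi-movability and the $\lambda$-action), so $\eta$ is a covering of degree $d$ and $d=\#\eta^{-1}(z)$; your sketch never addresses this, and without it ``fiber $=\{[e:z]\}$'' proves nothing about $d$ or about birationality. There is also a bookkeeping error at the crucial point: the cell $u^{-1}BuB/B$ has dimension $l(u)$ and tangent space $\bigoplus_{\alpha\in\Phi_u}{\mathfrak g}_{-\alpha}$ at $eB$, not $\bigoplus_{\alpha\in\Phi_u^c}{\mathfrak g}_{-\alpha}$, and for the triple whose intersection number you compute the relevant hypothesis is the partition $\Phi^+=\Phi^c_u\ccup\Phi^c_v\ccup\Phi^c_w$ (equivalently $\Phi_u\cap\Phi_v\cap\Phi_w=\emptyset$); with the partition $\Phi^+=\Phi_u\ccup\Phi_v\ccup\Phi_w$ that you invoke, the codimensions of $\sigma_u,\sigma_v,\sigma_w$ do not even add up to $\dim G/B$.

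The paper's treatment of the fiber is moreover global, not local: it identifies $\eta^{-1}(z)$ with $\{gB:\ g^{-1}z\in C^+\}$, uses $\lim_{t\to 0}\lambda(t)g^{-1}z=z$ and \cite[Lemma~12]{GITEigen} to conclude $g\in G_zB$, and then proves $G_z$ is connected (conjugacy of maximal tori inside $G_z^\circ$ plus the fact that $g$ fixes $u^{-1}B$), so the finite fiber $G_zB/B$ is one point and $d=1$. Your chart argument only controls the intersection near $eB$ and, as written (``cut out by the equations for $\Phi_u$ up to higher order, which the partition makes exact''), is an unproved assertion; nothing in it excludes points of $u^{-1}BuB\cap v^{-1}BvB\cap w^{-1}BwB$ away from $eB$. (This particular defect is repairable, since each translated cell $u^{-1}BuB/B$ is the orbit of $eB$ under the unipotent group generated by the root subgroups for $-\Phi_u$ and hence lies entirely in the big cell, so the set-theoretic triple intersection is governed by $\Phi_u\cap\Phi_v\cap\Phi_w$ --- but you would have to say this, and it still does not yield $d=1$ without the covering argument above.) Finally, your fallback of invoking Theorems~\ref{th:regext} and \ref{th:main} is circular: Theorem~\ref{th:bkprod} is used in the paper's proof of Theorem~\ref{th:main2} to pass from the cohomological condition to the regularly extremal one, so it cannot be deduced from that equivalence.
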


\begin{proof}
  Let $(u,\,v,\,w)\in W^3$ such that 
$$
\sigma_{u}\kbprod\sigma_{v}\kbprod\sigma_{w}=d[{\rm pt}].
$$
Note that $d$ is the coefficient of $\sigma_{w}^\vee$ in the expression
of $\sigma_{u}\kbprod\sigma_{v}$ as a linear combination of 
Schubert classes.
So, we have to prove that if $d\neq 0$  then $d=1$.

Set $X=(G/B)^3$, $z=(u^{-1}B,\,v^{-1}B,\,w^{-1}B)$ and 
$C^+=Bu^{-1}B\times Bv^{-1}B\times Bw^{-1}B)$. 
Consider the following morphism
$$
\begin{array}{lccc}
  \eta\,:&G\times_B C^+&\longto&X\\
&[g:x]&\longmapsto&gx.
\end{array}
$$
Since $(u,\,v,\,w)$ is Levi-movable, the tangent map of $\eta$ is invertible at $[e:z]$
and so at any point of $C^+$.
It follows that $\eta$ is a covering of degree $d$.
In particular $d$ is the cardinality of the fiber $\eta^{-1}(z)$.

Consider the natural projection $\pi\,:\,G\times_B C^+\longto G/B$.
Choose a one parameter subgroup $\lambda$ of $T$ such that $P(\lambda)=B$; that is, 
$\lambda$ is  dominant and regular.
The map $\pi$ identifies $\eta^{-1}(z)$ with
the set of $gB\in G/B$ such that $g^{-1}z\in C^+$.
Since $\lim_{t\to 0}\lambda(t)(g^{-1}z)=z$, \cite[Lemma~12]{GITEigen} implies that $g^{-1}z\in Bz$.
So, $g\in G_zB$. Finally,  $\eta^{-1}(z)=G_z.B\subset G/B$.

It remains to prove that $G_z$ is connected. 
Let $g\in G_z$. Since $T$ and $gTg^{-1}$ are maximal tori of $G_z^\circ$, there exists
$h\in G_z^\circ$ such that $gTg^{-1}=hTh^{-1}$.
Then, $h^{-1}g$ normalizes $T$. But, $h^{-1}g$ fixes $u^{-1}B$. 
We deduce that $h^{-1}g$ belongs to $T$ and so to $G_z^\circ$. 
It follows that $g$ belongs to $G_z^\circ$.
\end{proof}

\section{The main theorem}

Lemmas~\ref{lem:prvC} and \ref{lem:stabone}, Theorems~\ref{th:DR} and \ref{th:regext} show 
that Theorem~\ref{th:main} is equivalent to the following

\begin{theo}
\label{th:main2}
  Let  $(\lambda,\,\mu,\,\nu)$ be a triple of dominant weights.
Then, the following are equivalent
\begin{enumerate}
\item \label{ass:prv}
 there exist $u,\,v,\,w\in W$ such that
\begin{enumerate}
\item $u^{-1}\lambda+v^{-1}\mu+w^{-1}\nu=0$, and
\item $X^{\rm ss}(\Li_{(\lambda,\,\mu,\,\nu)})\quot G$ is a point.
  \end{enumerate}
\item \label{ass:coh}
there exist $u,\,v,\,w\in W$ such that
\begin{enumerate}
\item $u^{-1}\lambda+v^{-1}\mu+w^{-1}\nu=0$, and
\item $\Phi^+=\Phi_u\ccup\Phi_v\ccup\Phi_w$,
\end{enumerate}
\item \label{ass:ext}
there exist $u,\,v,\,w\in W$ such that
\begin{enumerate}
\item $u^{-1}\lambda+v^{-1}\mu+w^{-1}\nu=0$,
\item $\Phi^+=\Phi_u^c\ccup\Phi_v^c\ccup\Phi_w^c$, and
\item $\sigma_u\cdot\sigma_v\cdot\sigma_w=\sigma_e$.
\end{enumerate}
\end{enumerate}
\end{theo}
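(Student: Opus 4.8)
The plan is to show the chain of implications $(\ref{ass:coh})\Rightarrow(\ref{ass:ext})\Rightarrow(\ref{ass:prv})\Rightarrow(\ref{ass:coh})$, using GIT along the way. The linear equation $u^{-1}\lambda+v^{-1}\mu+w^{-1}\nu=0$ is common to all three, so the work is to compare the three combinatorial/geometric conditions on the triple $(u,v,w)$.

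First, $(\ref{ass:coh})\Rightarrow(\ref{ass:ext})$. Given $\Phi^+=\Phi_u\ccup\Phi_v\ccup\Phi_w$, I would pass to the triple $(w_0u,w_0v,w_0w)$ (or rather to the ``complementary'' triple); since $\Phi_{w_0w}=\Phi^c_w$ one immediately gets $\Phi^+=\Phi_{w_0u}^c\ccup\Phi_{w_0v}^c\ccup\Phi_{w_0w}^c$, i.e.\ Levi-movability in the sense of~(\ref{eq:levimovB}). The point count $|\Phi_u|+|\Phi_v|+|\Phi_w|=|\Phi^+|$ gives $l(u)+l(v)+l(w)=l(w_0)$, hence for the Poincar\'e-dual indices $l(w_0u)+l(w_0v)+l(w_0w)=2l(w_0)$, the correct codimension count for a product landing in degree $2l(w_0)$. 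That the Belkale--Kumar product of these three classes is a single point with coefficient $1$ is exactly Theorem~\ref{th:bkprod1} (equivalently Theorem~\ref{th:bkprod}); and a nonzero Belkale--Kumar coefficient forces the ordinary coefficient $\sigma_u\cdot\sigma_v\cdot\sigma_w$ to be nonzero, hence (being an integer that Theorem~\ref{th:bkprod1} pins to $1$) equal to $\sigma_e$. One must also translate the linear equation through $w_0$, which is routine since $(w_0u)^{-1}=u^{-1}w_0$ and $w_0$ preserves the relevant cone structure after the standard sign bookkeeping. So (\ref{ass:ext}) holds for the triple $(w_0u,w_0v,w_0w)$.

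Next, $(\ref{ass:ext})\Rightarrow(\ref{ass:prv})$. This is the geometric heart. Take $\lambda$ a dominant regular one-parameter subgroup, so $P(\lambda)=B$, and let $z=(u^{-1}B,v^{-1}B,w^{-1}B)$, $C=\{z\}$, $C^+=Bu^{-1}B\times Bv^{-1}B\times Bw^{-1}B$. The condition $\sigma_u\cdot\sigma_v\cdot\sigma_w=\sigma_e$ says (by the dictionary recalled just before \S2.3) that $(C,\lambda)$ is \emph{well covering}: $\eta\colon G\times_B C^+\to X$ is birational and $\eta^{-1}(z)=\{[e:z]\}$, and moreover Levi-movability gives that the tangent map of $\eta$ is invertible at $[e:z]$. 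By the general GIT machinery of \cite{GITEigen,GITEigen2} a well-covering pair produces a face of $\tc^G(X)$, and here $\mu^{\Li_{(\lambda,\mu,\nu)}}(C,T)=-(u^{-1}\lambda+v^{-1}\mu+w^{-1}\nu)^{\vee}=0$ after rewriting the weights at the fixed point $z$ via Lemma~\ref{lem:prvC} (conjugating by $u,v,w$). The vanishing of this numerical invariant together with the well-covering property forces $z$ to be semistable and forces the GIT quotient $X^{\rm ss}(\Li_{(\lambda,\mu,\nu)})\quot G$ to be a point: the well-covering pair gives a dominant map from a space with a dense $B$-orbit's worth of freedom modulo $G_z$, and birationality of $\eta$ together with connectedness of $G_z$ (the argument of the proof of Theorem~\ref{th:bkprod}, using that a torus-normalizing element fixing $u^{-1}B$ lies in $T$) shows the quotient has dimension $0$; projectivity and irreducibility then make it a point. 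This is the step I expect to be the main obstacle, since it requires extracting ``the quotient is a single point'' from the well-covering condition rather than merely ``the class $\Li$ lies on a face,'' and care is needed that the face in question is $0$-dimensional transverse to the $G$-invariant directions — i.e.\ one uses that the covering pair is associated to a \emph{regular} face of minimal dimension $2r$, which is precisely what being regularly extremal encodes.

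Finally, $(\ref{ass:prv})\Rightarrow(\ref{ass:coh})$. Suppose $u^{-1}\lambda+v^{-1}\mu+w^{-1}\nu=0$ and $X^{\rm ss}(\Li_{(\lambda,\mu,\nu)})\quot G$ is a point. By Lemma~\ref{lem:prvC} the first condition says $\mu^{\Li}(\{z\},T)=0$ for $z=(u^{-1}B,v^{-1}B,w^{-1}B)$, so $z$ is a candidate semistable point detected by the torus; since the quotient is a point, essentially every semistable point is in one $G$-orbit, and in particular one can choose $(u,v,w)$ so that the Bia\l ynicki--Birula cell $C^+$ through $z$ dominates $X$ via $\eta$ — otherwise $X^{ss}$ would be too small, contradicting that the quotient, being a point, still has $X^{ss}$ of full dimension. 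Dominance of $\eta\colon G\times_B C^+\to X$ means $\dim C^+ + \dim G/B \ge \dim X = 3\dim G/B$, i.e.\ $l(u)+l(v)+l(w)\ge 2l(w_0)$; and the existence of the invariant section forces $\eta$ to be generically finite (a strict inequality would make the general fiber positive-dimensional while the numerical invariant still vanishes, which the GIT numerical criterion excludes), so $l(u)+l(v)+l(w)=2l(w_0)$. Generic finiteness with $\mu^{\Li}(\{z\},T)=0$ is exactly $\sigma_u\cdot\sigma_v\cdot\sigma_w=d[{\rm pt}]$, $d>0$; the equality of lengths together with the weight computation (weights of $T$ on $T_z X$ at $z$ are $-\Phi_u^c\sqcup-\Phi_v^c\sqcup-\Phi_w^c$) combined with $d>0$ gives $T_u\cap T_v\cap T_w=\{0\}$, hence $\Phi^c_u\ccup\Phi^c_v\ccup\Phi^c_w=\Phi^+$ by~(\ref{eq:levimovB}); passing back through $w_0$ converts this to $\Phi^+=\Phi_{u'}\ccup\Phi_{v'}\ccup\Phi_{w'}$ for $u'=w_0u$ etc., which is (\ref{ass:coh}). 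This closes the cycle, and invoking Theorem~\ref{th:DR}, Theorem~\ref{th:regext}, and Lemmas~\ref{lem:prvC},~\ref{lem:stabone} identifies Theorem~\ref{th:main2} with Theorem~\ref{th:main}.
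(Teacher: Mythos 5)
Your cycle $(\ref{ass:coh})\Rightarrow(\ref{ass:ext})\Rightarrow(\ref{ass:prv})\Rightarrow(\ref{ass:coh})$ fails at its last, and hardest, link. In $(\ref{ass:prv})\Rightarrow(\ref{ass:coh})$ you assert that, the quotient being a point, ``one can choose $(u,v,w)$ so that the cell $C^+$ through $z$ dominates $X$ via $\eta$ --- otherwise $X^{\rm ss}$ would be too small.'' That is not an argument: the quotient being a point only says that $X^{\rm ss}(\Li_{(\lambda,\mu,\nu)})$ contains a unique closed $G$-orbit; it in no way forces the Bia\l{}ynicki-Birula cell attached to the particular $T$-fixed point $z$ supplied by the linear relation $u^{-1}\lambda+v^{-1}\mu+w^{-1}\nu=0$ to sweep out $X$. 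Producing a triple satisfying the linear relation \emph{and} a covering condition simultaneously is precisely what the paper's long four-step argument is for: it descends to the product $\overline{X}$ of partial flag varieties on which $\Li_{(\lambda,\mu,\nu)}$ is ample, uses Luna's result to show $\overline{z}$ is semistable, invokes the parameterization of the faces of the cone by well covering pairs (Theorem~H of \cite{GITEigen} together with \cite{carquois}) and Lemma~\ref{lem:Xpt} to show the component $\overline{C}$ is the single point $\overline{z}$, proves $G_{\overline{z}}=G^\lambda$, lifts the pair to $X$, and only then adjusts $u,v,w$ by elements of $W_P$ on the left and of $W_{G^\lambda}$ on the right, using \cite{multi} or \cite{Rich:mult}, to get the cohomological identity. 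Worse, even granting dominance, your final inference ``$\sigma_u\cdot\sigma_v\cdot\sigma_w=d[{\rm pt}]$ with $d>0$ \dots\ gives $T_u\cap T_v\cap T_w=\{0\}$'' is false: a nonzero cup product only yields transversality of \emph{generic} translates $p_1T_u,\,p_2T_v,\,p_3T_w$ (Lemma~\ref{lem:fondBK}), whereas condition~(\ref{eq:levimovB}) requires the translates to lie in the Levi $T$; this is exactly the gap between the cup product and the Belkale--Kumar product, and there are many triples with $d>0$, $l(u)+l(v)+l(w)=2l(w_0)$, that are not Levi-movable. So the partition $\Phi^+=\Phi_u^c\ccup\Phi_v^c\ccup\Phi_w^c$ does not follow, and the cycle does not close.

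Two smaller points. In $(\ref{ass:coh})\Rightarrow(\ref{ass:ext})$ you multiply by $w_0$ on the left; this converts the root partition correctly (since $\Phi_{w_0u}=\Phi_u^c$) but destroys the linear relation, because $(w_0u)^{-1}\lambda=u^{-1}w_0\lambda$. The correct move, as in the paper, is right multiplication: $\Phi_{uw_0}=-w_0\Phi_u^c$ still converts the partition, and $(uw_0)^{-1}\lambda=w_0u^{-1}\lambda$ preserves the equation. Finally, you identify $(\ref{ass:ext})\Rightarrow(\ref{ass:prv})$ as the main obstacle; in fact that step is a two-line consequence of \cite[Proposition~9]{GITEigen} (a dominant morphism from the point $C$ to the quotient), and the genuine difficulty of the theorem sits entirely in the direction you treated most lightly.
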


We first prove

\begin{lemma}
  \label{lem:Xpt}
Let $G$ be a reductive group and $Y$ be a product of flag varieties of $G$.
We assume that $\ac^G(Y)=\Pic^G(Y)_\QQ^{++}$.

Then, $Y$ is a point.
\end{lemma}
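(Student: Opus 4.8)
The plan: the hypothesis says that $\tc^G(Y)$ contains the whole ample cone $\Pic^G(Y)^{++}_\QQ$, and I want to show that this is impossible when $Y$ has positive dimension. First recall that $\tc^G(Y)$ is a closed convex cone (the cone generated by the finitely generated semigroup $\TC^G(Y)$). Since $\ac^G(Y)=\Pic^G(Y)^{++}_\QQ\cap\tc^G(Y)$, the assumption $\ac^G(Y)=\Pic^G(Y)^{++}_\QQ$ gives $\Pic^G(Y)^{++}_\QQ\subseteq\tc^G(Y)$, so, passing to closures, $\tc^G(Y)$ contains the entire nef cone $\overline{\Pic^G(Y)^{++}_\QQ}$.

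Next, suppose $Y$ is not a point and write $Y=G/Q_1\times\cdots\times G/Q_s$ with the $Q_k$ parabolic subgroups containing $B$. Then some $Q_j$ is a proper parabolic, so $G/Q_j$ has positive dimension and $X(Q_j)$ contains a nonzero dominant character $\omega$. Let $\Li$ be the $G$-linearized line bundle on $Y$ equal to $\Li_\omega$ (the line bundle on $G/Q_j$ attached to $\omega$) on the $j$-th factor and to the trivial bundle on the other factors. Being an external product of nef line bundles, $\Li$ is nef, hence $\Li\in\tc^G(Y)$ by the first paragraph. Writing $\Li$ as a nonnegative rational combination of elements of $\TC^G(Y)$ and clearing denominators expresses some power $\Li^{\otimes N}$ as a tensor product of powers of line bundles in $\TC^G(Y)$; raising the corresponding nonzero $G$-invariant sections to suitable powers so that the exponents agree and then multiplying them (the result being nonzero since $Y$ is irreducible) shows that ${\rm H}^0(Y,\Li^{\otimes K})^G\neq\{0\}$ for some integer $K\geq 1$.

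On the other hand, the K\"unneth formula together with ${\rm H}^0(G/Q_k,\O_{G/Q_k})=\CC$ gives ${\rm H}^0(Y,\Li^{\otimes K})={\rm H}^0(G/Q_j,\Li_{K\omega})$, which by the Borel--Weil theorem is the irreducible module $V_{K\omega}^*$. Since $\omega\neq 0$, $V_{K\omega}$ is not the trivial module, so neither is $V_{K\omega}^*$, and hence ${\rm H}^0(Y,\Li^{\otimes K})^G=\{0\}$. This contradiction shows that $Y$ is a point.

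The one step that deserves care is ``$\Li$ nef $\Rightarrow\Li\in\tc^G(Y)\Rightarrow$ a power of $\Li$ carries a $G$-invariant section'', which uses that $\tc^G(Y)$ is closed; this is standard for these GIT cones (cf.\ \cite{GITEigen}). If one wishes to avoid it, one can take instead an \emph{ample} $\Li$ whose $j$-th factor is a large enough multiple of $\Li_\omega$ (so that $\Li\in\Pic^G(Y)^{++}_\QQ\subseteq\tc^G(Y)$ outright); a $G$-invariant vector in ${\rm H}^0(Y,\Li^{\otimes K})^G$ is then ruled out by a highest-weight comparison, since a sufficiently dominant highest weight cannot be a weight of a tensor product of comparatively small irreducible $G$-modules.
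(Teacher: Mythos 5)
Your argument is correct, but it takes a genuinely different route from the paper. The paper argues by cases on the number of factors: for a single proper flag variety $\ac^G(Y)$ is empty, for two factors it is contained in the set $\{\mu=-w_0\lambda\}$ and so has empty interior, and for three or more factors it uses compactness of the moment polytope to replace $\nu$ by a large multiple $n\nu$ and exhibit an \emph{ample} class $\Li_{(\lambda,\mu,n\nu)}$ outside $\ac^G(Y)$. You instead give a uniform argument: from $\ac^G(Y)=\Pic^G(Y)_\QQ^{++}$ you get $\Pic^G(Y)_\QQ^{++}\subseteq\tc^G(Y)$, then use that $\tc^G(Y)$ is a rational polyhedral (hence closed) cone to conclude that every nef class lies in $\tc^G(Y)$, and you test this on the degenerate nef class supported on one positive-dimensional factor, where K\"unneth and Borel--Weil show that no positive power has $G$-invariant sections (note your computation is sound even for reductive, non-semisimple $G$: if $\omega$ happens to be a character of $G$, the module $V_{K\omega}$ is one-dimensional but still nontrivial, so the invariants still vanish). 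The trade-off is clear: your proof avoids the case analysis and the moment-polytope compactness, but it leans on finite generation of the semigroup $\TC^G(Y)$ (equivalently, polyhedrality and closedness of $\tc^G(Y)$), a standard fact that the paper states only for $(G/B)^3$ and which you rightly flag and attribute; the paper's proof never leaves the ample cone and therefore needs no closedness statement. Your parenthetical fallback (an ample class with one very dominant factor, ruled out by a dominance-order comparison, which is homogeneous in $K$ and hence uniform in all powers) is essentially a rougher version of the paper's scaling argument and would need a line of justification to replace ``comparatively small'' by a precise inequality, but the main argument does not depend on it.
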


\begin{proof}
We are going to prove that if $Y$ is not a point, then $\ac^G(Y)$ is not equal to 
$\Pic^G(Y)_\QQ^{++}$.
If $Y=G/P_1$ with $P_1$ a strict parabolic subgroup of $G$, $\ac^G(Y)$ is empty.
If $Y=G/P_1\times G/P_2$ with $P_1$ and $P_2$ two strict parabolic subgroups of $G$,
a weight $(\lambda,\mu)$ belongs to $\ac^G(Y)$ if and only if $\mu=-w_0\lambda$.
In particular, $\ac^G(Y)$ has empty interior.

Let us now assume that, $Y=G/P_1\times G/P_2\times G/P_3$ with $P_1$, $P_2$ and $P_3$ 
three strict parabolic subgroups of $G$.
Let $(\lambda,\,\mu,\,\nu)$ be three weights such that $\Li_{(\lambda,\,\mu,\,\nu)}$
is an ample line bundle on $Y$.
The set of $\gamma\in X(T)\otimes\QQ$ such that
there exists a positive integer $k$ such that 
$V_{k\gamma}^*$ is contained in $V_{k\lambda}\otimes V_{k\mu}$ is a compact polytope
(namely, a moment polytope). In particular, there exists $n$ 
such that for any positive integer $k$, $V_{kn\nu}^*$ is not a submodule of 
$V_{k\lambda}\otimes V_{k\mu}$.
So, the ample element $\Li_{(\lambda,\,\mu,\,n\nu)}$ does not belong to $\ac^G(Y)$.

The case when $Y$ is a product of more than three flag varieties works similarly.
\end{proof}

\bigskip
\begin{proof}[of Theorem~\ref{th:main2}]
Note that for any $u\in W$, we have
$$
\begin{array}{r@{\,}l}
\Phi_{w_0u}&=\{\alpha\in\Phi^+\,|\,-w_0u\alpha\in\Phi^+\}\\&=
\{\alpha\in\Phi^+\,|\,u\alpha\in\Phi^+\}\\&=\Phi_u^c;
\end{array}$$
and 
$$
\begin{array}{r@{\,}l}
  \Phi_{uw_0}&=\{\alpha\in\Phi^+\,|\,u(-w_0\alpha)\in\Phi^+\}\\&=
-w_0\{\alpha\in\Phi^+\,|\,u\alpha\in\Phi^+\}\\&=-w_0\Phi_u^c.
\end{array}
$$

Assume that Assertion~\ref{ass:ext} is satisfied for $u,\,v$ and $w$ in $W$.
Then, we have:
$$
\begin{array}{r@{}l}
  \Phi^+=-w_0\Phi^+&=(-w_0\Phi_u^c)\ccup(-w_0\Phi_v^c)\ccup(-w_0\Phi_w^c)\\
&=\Phi_{uw_0}\ccup\Phi_{vw_0}\ccup\Phi_{ww_0}.
\end{array}
$$
So, $uw_0,\,vw_0$ and $ww_0$ satisfy Assertion~\ref{ass:coh}.\\

Conversely, assume that  Assertion~\ref{ass:coh} is satisfied for $u',\,v'$ and $w'$ in $W$.
Set $u=u'w_0,\,v=v'w_0$ and $w=w'w_0$. 
The above proof shows that $\Phi^+=\Phi_u^c\ccup\Phi_v^c\ccup\Phi_w^c$.
Theorem~\ref{th:bkprod} shows that $\sigma_u.\sigma_v.\sigma_w=\sigma_e$.
The identity $u^{-1}\lambda+v^{-1}\mu+w^{-1}\nu=0$ follows from
$u'^{-1}\lambda+v'^{-1}\mu+w'^{-1}\nu=0$.
Finally, Assertion~\ref{ass:ext} holds.\\ 

Let us assume that Assertion~\ref{ass:ext} is satisfied and set 
$C=\{(u^{-1}B,\,v^{-1}B,\,w^{-1}B)\}$. By \cite[Proposition~9]{GITEigen}, there exists
a dominant morphism from $C$ to   $X^{\rm ss}(\Li_{(\lambda,\,\mu,\,\nu)})\quot G$; 
it follows that  $X^{\rm ss}(\Li_{(\lambda,\,\mu,\,\nu)})\quot G$ is a point.\\

Let us assume that $(\lambda,\,\mu,\,\nu)$ satisfies Assertion~\ref{ass:prv}.
If $\overline{X}$ is a  product of three flag manifolds for $G$, there exists a
unique $G^3$-equivariant map $p\,:\,X\longto \overline{X}$.
There exists a unique such variety $\overline{X}$, such that 
$\Li_{(\lambda,\,\mu,\,\nu)}$ is the pullback by $p$ of an ample $G$-linearized line bundle
$\overline{\Li}$ on $\overline{X}$. 
Consider the image $\overline{z}$ of $(u^{-1}B,\,v^{-1}B,\,w^{-1}B)$ by $p$.

\bigskip
The condition $u^{-1}\lambda+v^{-1}\mu+w^{-1}\nu=0$ implies that $T$ acts trivially 
on the fiber in $\overline{\Li}$ over $\overline{z}$.
Since $T$ has finite index in its normalizer $N(T)$  in $G$, $\overline{z}$ 
is semitable for $\overline{\Li}$ and the action of $N(T)$.
A Luna theorem (see \cite[Proposition~8]{GITEigen} for an adapted version) shows that 
$\overline{z}$ is semistable for $\overline{\Li}$ and the action of $G$.
In particular, $\overline{\Li}$ belongs to $\ac^G(\overline{X})$.

Let $\overline{\Face}$ be the face of $\tc^G(\overline{X})$ containing 
$\overline{\Li}$ in its relative interior.
By \cite[Theorem~H]{GITEigen}, there exists a well covering pair
$(\overline{C},\lambda)$ of $\overline{X}$ such that
$\overline{\Face}$ is the set of $\Li\in\tc^G(\overline{X})$
such that $\mu^\Li(\overline{C},\lambda)=0$.
The first step of this proof is to show that there exists such a pair where $\overline{C}$
is a singleton.

\bigskip
 By \cite{carquois}, there exists a well covering pair
$(\overline{C},\lambda)$ of $\overline{X}$ such that
\begin{enumerate}
\item $\lambda$ is a dominant one parameter subgroup of $T$;
\item $\overline{\Face}$ is the set of $\Li\in\tc^G(\overline{X})_\QQ$ 
such that $\mu^\Li(\overline{C},\lambda)=0$;
\item $\overline{\Li}_{|\overline{C}}$ belongs to the relative interior of 
$\ac^{G^\lambda}(\overline{C})$;
\item \label{cond:acrond}
if $K$ is the kernel of the action of $G^\lambda$ on $\overline{C}$,
$\ac^{G^\lambda}(\overline{C})$ spans the subspace $\Pic^G(\overline{X})_\QQ^K$.
\end{enumerate}
We claim that $\overline{C}$ is a singleton.
We mention that the proof of the claim will use Lemma~\ref{lem:Xpt}.

We first prove that $G.\overline{z}$ is the unique closed $G$-orbit in 
$\overline{X}^{\rm ss}(\overline{\Li})$.
Since $\overline{X}^{\rm ss}(\overline{\Li})\quot G=
X^{\rm ss}(\Li_{(\lambda,\,\mu,\,\nu)})\quot G$ is a point,
$\overline{X}^{\rm ss}(\overline{\Li})$ is affine and contains a unique closed $G$-orbit.
Since $\overline{z}$ is fixed by $T$ and $B/T$ is unipotent, $B.\overline{z}$ is closed in the affine
variety  $\overline{X}^{\rm ss}(\overline{\Li})$.
Since $G/B$ is complete, we deduce  that $G.\overline{z}$ is closed 
in $\overline{X}^{\rm ss}(\overline{\Li})$.\\

By \cite[Proposition~10]{GITEigen}, $\overline{C}$ intersects $G.\overline{z}$.
Up to changing $\overline{z}$ by another point in $W.\overline{z}$, one
may assume that $\overline{z}\in\overline{C}$.\\

We claim that  $\ac^{G^\lambda}(\overline{C})$ is the set of points in
$\Pic^{G^\lambda}(\overline{C})_\QQ^{++}$ with trivial action of $K^\circ$.
By Condition~\ref{cond:acrond}, it is sufficient to prove that 
$\ac^{G^\lambda}(\overline{C})$ is the intersection of
$\Pic^{G^\lambda}(\overline{C})_\QQ^{++}$ and a linear subspace.
The kernel of $\mu^\bullet(\overline{z},T)$ will be this subspace.
By \cite[Corollary~1]{Luna:adh} (see also, \cite[Proposition~8]{GITEigen}), 
if $\Mi\in \Pic^{G^\lambda}(\overline{C})_\QQ^{++}$
satisfy $\mu^\Mi(\overline{z},T)=0$ then $\overline{z}$ is semistable for
$\Mi$ and $\Mi$ belongs to $\ac^{G^\lambda}(\overline{C})$.
Since $\overline{C}^{\rm ss}(\overline{\Li}_{|\overline{C}})\quot G^\lambda$ is a point,
$G^\lambda\overline{z}$ is the unique closed $G^\lambda$-orbit in 
$\overline{C}^{\rm ss}(\overline{\Li}_{|\overline{C}})$.
But, $\overline{\Li}_{|\overline{C}}$ belongs to the relative interior of 
$\ac^G(\overline{C})$. It follows that $G^\lambda.\overline{z}$ is the only closed $G^\lambda$-orbit
in $\overline{C}^{\rm ss}(\Mi)$ for any $\Mi$ in the relative interior of 
$\ac^G(\overline{C})$. In particular, $\mu^\Mi(\overline{z},T)=0$.
This implies that $\ac^{G^\lambda}(\overline{C})$ is contained in 
the kernel of $\mu^\bullet(\overline{z},T)$.

\bigskip
The claim and Lemma~\ref{lem:Xpt} below imply that $\overline{C}$ is  one point; so,
$\overline{C}=\{\overline{z}\}$. This ends the first step.

\bigskip
The second step consists in proving that $G_{\overline{z}}=G^\lambda$.
Consider $\overline{\eta}\,:\,G\times_{P(\lambda)}\overline{C}^+\longto\overline{X}$.
Since $(\overline{C},\lambda)$ is well covering, $\overline{\eta}^{-1}(z)$ is only one point.
This implies that $G_{\overline{z}}$ is contained in $P(\lambda)$.
On the other hand, $G^\lambda$ is connected and acts on each irreducible component of 
$\overline{X}^\lambda$. We deduce that $G^\lambda$ fixes $\overline{z}$.
Moreover, $G.\overline{z}$ is affine, and $G_{\overline{z}}$ is reductive.
This implies that $G_{\overline{z}}=G^\lambda$.

\bigskip
The third step consists in raising $(\overline{C},\lambda)$ to a well covering pair 
$(C,\lambda)$ of $X$.
Let $P$, $Q$ and $R$ be the parabolic subgroups of $G$ containing $B$ such that 
$\overline{X}=G/P\times G/Q\times G/R$. 
Up to multiplying $u$ by an element of $W_P$ on the left, we may assume that
 $\overline{BuP(\lambda)}=\overline{PuP(\lambda)}$.
Similarly, we choose $v$ and $w$ without changing 
$\overline{z}=(u^{-1}P,\,v^{-1}Q,\,w^{-1}R)$.
Since $(\overline{C},\lambda)$ is well covering, \cite[Proposition~11]{GITEigen} shows that:
\begin{eqnarray}
  \label{eq:1}
  [\overline{BuP(\lambda)}]\kbprod
[\overline{BvP(\lambda)}]\kbprod
[\overline{BwP(\lambda)}]=[{\rm pt}]\in {\rm H}^*(G/P(\lambda),\ZZ).
\end{eqnarray}
Set $C=G^\lambda u^{-1}B\times G^\lambda v^{-1}B\times G^\lambda w^{-1}B\subset G/B^3$.
Then, \cite[Proposition~11]{GITEigen} shows that $(C,\lambda)$ 
is a well covering pair of $X$. The corresponding face $\Face$ of $\lr(G)$ contains $\overline{\Face}$.

\bigskip
The forth step consists in perturbing $(C,\lambda)$ to obtain a well covering pair 
$(C',\lambda')$ with a {\bf regular} one parameter subgroup $\lambda'$
such that the corresponding face $\Face'$ of $\lr(G)$ still contains $\overline{\Face}$.
Let us recall that the map $W^{P(\lambda)}\times W_{P(\lambda)}\longto W$, $(u,v)\mapsto uv$ is 
a bijection. For $w\in W$, we will denote by $\bar w$ the unique element of $W_{P(\lambda)}$
such that $w\in W^{P(\lambda)}w$.
Since $G_{\overline{z}}=G^\lambda$, one can multiply  $u$, $v$ and $w$ on the right by
 elements of $W_{G^\lambda}$ to obtain:
 \begin{enumerate}
 \item $\overline{z}=(u^{-1}P,\,v^{-1}Q,\,w^{-1}R)$,
\item $[\overline{BuP(\lambda)}]\kbprod
[\overline{BvP(\lambda)}]\kbprod
[\overline{BwP(\lambda)}]=
[{\rm pt}]\in{\rm H}^0(G/P(\lambda),\ZZ)$,
\item \label{condbar}
$[\overline{B^\lambda \bar uB^\lambda}]\kbprod
[\overline{B^\lambda \bar vB^\lambda}]\kbprod
[\overline{B^\lambda \bar wB^\lambda}]=
[{\rm pt}]\in{\rm H}^0(G^\lambda/B^\lambda,\ZZ)$.
 \end{enumerate}
We claim that 
\begin{eqnarray}
  \label{eq:2}
  [\overline{BuB}]\kbprod
[\overline{BvB}]\kbprod
[\overline{BwB}]=[{\rm pt}]\in{\rm H}^0(G/B,\ZZ).
\end{eqnarray}
Set $z=(u^{-1}B,\,v^{-1}B,\,w^{-1}B)$ and
$C^+=B^3.z$.
Consider the morphism $\eta\,:\,G\times_BC^+\longto X$.
To prove the claim, we have to prove that $\eta$ is birational 
and that $\eta^{-1}(z)=\{[e:z]\}$.
By \cite{multi} or \cite{Rich:mult}, $[\overline{BuB}]\cdot
[\overline{BvB}]\cdot
[\overline{BwB}]=[{\rm pt}]$ and $\eta$ is birational.
Let now $g\in G$ such that $g^{-1}z\in C^+$. 
It remains to prove that $g\in B$.
Since $C^+=B^3.z$ and $\overline{C}^+=P(\lambda)^3p(z)$, 
$g^{-1}\overline{z}\in\overline{C}^+$. 
But, $(\overline{C}^+,\lambda)$ is well covering, and so,
$g\in P(\lambda)$.
Since $P(\lambda)=G^\lambda B$, we may assume that $g\in G^\lambda$.

Consider now, the subvariety
 $F=G^\lambda u^{-1}B\times G^\lambda v^{-1}B\times G^\lambda w^{-1}B$ of $X$.
There is a unique $G^\lambda$-equivariant isomorphism from $F$ onto 
$(G^\lambda/B^\lambda)^3$ and $C^+\cap F$ maps onto
$B^\lambda \bar u^{-1}B^\lambda\times
B^\lambda \bar v^{-1}B^\lambda\times
B^\lambda \bar w^{-1}B^\lambda$
by this isomorphism. 
Now, since $g^{-1}z\in C^+\cap F$ and $g\in G^\lambda$, 
Condition \ref{condbar} implies that $g\in B^\lambda$.

Finally, Condition~\ref{eq:2} means that $(u,v,w)$ satisfies Assertion~\ref{ass:ext}.
\end{proof}

\bibliographystyle{amsalpha}
\bibliography{biblio}

\def\cprime{$'$}
\providecommand{\bysame}{\leavevmode\hbox to3em{\hrulefill}\thinspace}
\providecommand{\MR}{\relax\ifhmode\unskip\space\fi MR }
\providecommand{\MRhref}[2]{%
  \href{http://www.ams.org/mathscinet-getitem?mr=#1}{#2}
}
\providecommand{\href}[2]{#2}
\begin{thebibliography}{PRRV67}

\bibitem[BK06]{BK}
Prakash Belkale and Shrawan Kumar, \emph{Eigenvalue problem and a new product
  in cohomology of flag varieties}, Invent. Math. \textbf{166} (2006), no.~1,
  185--228.

\bibitem[DR09a]{DR:prv2}
Ivan Dimitrov and Mike Roth, \emph{Cup products of line bundles on homogeneous
  varieties and generalized {PRV} components of multiplicity one}, 2009,
  pp.~1--61.

\bibitem[DR09b]{DR:prv1}
\bysame, \emph{Geometric realization of {PRV} components and the
  {L}ittlewood-{R}ichardson cone}, 2009, pp.~1--13.

\bibitem[Kum89]{Kumar:prv1}
Shrawan Kumar, \emph{A refinement of the {PRV} conjecture}, Invent. Math.
  \textbf{97} (1989), no.~2, 305--311.

\bibitem[Lun75]{Luna:adh}
D.~Luna, \emph{Adh\'erences d'orbite et invariants}, Invent. Math. \textbf{29}
  (1975), no.~3, 231--238.

\bibitem[Mat89]{Mathieu:prv}
Olivier Mathieu, \emph{Filtrations of {$B$}-modules}, Duke Math. J. \textbf{59}
  (1989), no.~2, 421--442.

\bibitem[MFK94]{GIT}
D.~Mumford, J.~Fogarty, and F.~Kirwan, \emph{Geometric invariant theory}, 3d
  ed., Springer Verlag, New York, 1994.

\bibitem[PRRV67]{PRV}
K.~R. Parthasarathy, R.~Ranga~Rao, and V.~S. Varadarajan, \emph{Representations
  of complex semi-simple {L}ie groups and {L}ie algebras}, Ann. of Math. (2)
  \textbf{85} (1967), 383--429.

\bibitem[Res07]{GITEigen}
Nicolas Ressayre, \emph{Geometric invariant theory and generalized eigenvalue
  problem}, Preprint (2007), no.~arXiv:0704.2127, 1--45.

\bibitem[Res08a]{GITEigen2}
\bysame, \emph{Geometric invariant theory and generalized eigenvalue problem
  {II}}, Preprint (2008), no.~arXiv:0903.1187, 1--25.

\bibitem[Res08b]{multi}
\bysame, \emph{Multiplicative formulas in cohomology of {$G/P$} and in quiver
  representations}, Preprint (2008), no.~arXiv:0812.2122, 1--20.

\bibitem[Res09]{carquois}
\bysame, \emph{{GIT}-cones and quivers}, Preprint (2009), no.~arXiv:0903.1202,
  1--15.

\bibitem[Ric08]{Rich:mult}
Edward Richmond, \emph{A multiplicative formula for structure constants in the
  cohomology of flag varieties}, 2008.

\bibitem[Ric09]{Richmond:recursion}
Edward Richmond, \emph{A partial horn recursion in the cohomology of flag
  varieties}, Journal of Algebraic Combinatorics (2009), 1--15.

\end{thebibliography}

\begin{center}
  -\hspace{1em}$\diamondsuit$\hspace{1em}-
\end{center}
\end{document}